\documentclass[11pt]{article}
\usepackage{amsfonts,amscd,amssymb, amsmath}
\usepackage[usenames]{color}
\usepackage{graphicx}
\usepackage{euscript}
\usepackage[bookmarksnumbered,plainpages,driverfallback=dvipdfm,backref=page]{hyperref}
\usepackage{enumerate}
\usepackage[all,tips]{xy}
\usepackage{epic,eepic}
\usepackage{color}
\usepackage{tikz}

\usepackage[hyperpageref]{backref}
\backref
\renewcommand*{\backref}[1]{}  
\renewcommand*{\backrefalt}[4]{
  \ifcase #1 %
  \relax
  \or
(Cited page~#2.)%
  \else
(Cited pages~#2.)%
  \fi}

\setcounter{MaxMatrixCols}{10}

\setlength{\unitlength}{.4mm}
\setlength{\textheight}{8.9in}
\setlength{\textwidth}{6.3in}
\setlength{\headheight}{12pt}
\setlength{\headsep}{25pt} \setlength{\footskip}{25pt}
\setlength{\oddsidemargin}{0.10in}
\setlength{\evensidemargin}{0.10in}
\setlength{\marginparwidth}{0.08in}
\setlength{\marginparsep}{0.001in}
\setlength{\marginparpush}{0.4\parindent}
\setlength{\topmargin}{-0.54cm}
\setlength{\columnsep}{10pt} \setlength{\columnseprule}{0pt}
\setlength{\parindent}{15pt}
\newtheorem{definition}{Definition}[section]
\newtheorem{lemma}[definition]{Lemma}
\newtheorem{proposition}[definition]{Proposition}
\newtheorem{corollary}[definition]{Corollary}
\newtheorem{remark}[definition]{Remark}

\newtheorem{theorem}[definition]{Theorem}
\newtheorem{example}[definition]{Example}

\def\rawo\lonra{\longrightarrow}

\allowdisplaybreaks[4]
\newenvironment{proof}{{\it Proof.}}{\hfill $ \square $ \vskip 4mm}

\begin{document}

\title{BiHom-NS-algebras, twisted Rota-Baxter operators and generalized Nijenhuis operators
}
\author{Ling Liu \\
College of Mathematics and Computer Science,\\
Zhejiang Normal University, 
Jinhua 321004, China \\
e-mail: ntliulin@zjnu.cn \and Abdenacer Makhlouf \\
Universit\'{e} de Haute Alsace, \\
IRIMAS-D\'epartement  de Math\'{e}matiques,  \\
18, rue des fr\`{e}res Lumi\`{e}re, F-68093 Mulhouse, France\\
e-mail: Abdenacer.Makhlouf@uha.fr \and Claudia Menini \\
University of Ferrara, \\
Department of Mathematics and Computer Science\\
Via Machiavelli 30, Ferrara, I-44121, Italy\\
e-mail: men@unife.it \and Florin Panaite \\
Institute of Mathematics of the Romanian Academy,\\
PO-Box 1-764, RO-014700 Bucharest, Romania\\
e-mail: florin.panaite@imar.ro }
\date{}
\maketitle

\begin{abstract}
The purpose of this paper is to introduce and study BiHom-NS-algebras, which are a generalization of  NS-algebras using two homomorphisms. Moreover, we discuss their relationships with twisted Rota-Baxter operators in a BiHom-associative context. Furthermore, we introduce a generalization of Nijenhuis operators that lead to BiHom-NS-algebras along BiHom-associative algebras.\\

\begin{small}
\noindent \textbf{Keywords}: BiHom-NS-algebra, twisted Rota-Baxter operator,
Nijenhuis operator.\\
\textbf{MSC2020}: 17D30, 17A01, 16W99, 16S80.
\end{small}
\end{abstract}

\section*{Introduction}

NS-algebras (corresponding to associative algebras) have been introduced by Leroux (\cite{leroux}) and independently by Uchino 
(\cite{uchinoarXiv}, which is an earlier, arXiv version of \cite{uchino}), as algebras with three operations $\prec $, $\succ $ and 
$\vee $ satisfying certain axioms that imply that the new operation $\ast =\prec +\succ +\vee $ is associative. NS-algebras 
generalize both dendriform (\cite{loday}) and tridendriform (\cite{lodayronco}) algebras. Examples are obtained via so-called twisted Rota-Baxter operators (see \cite{uchino}), which are a generalization of $\mathcal{O}$-operators involving a Hochschild 2-cocycle,  and via Nijenhuis operators (see \cite{leroux}). We recall from \cite{carinena} that a Nijenhuis operator $N:A\rightarrow A$ 
on an associative algebra $(A, \mu )$ with multiplication denoted by $\mu (x\otimes y)=xy$, for $x, y\in A$, is a linear map 
satisfying  
\begin{eqnarray}
&&N(x)N(y)=N(N(x)y+xN(y)-N(xy)), \;\;\; \forall \; x, y\in A. \label{Nijen}
\end{eqnarray}
By \cite{leroux}, if one defines $x\prec y=xN(y)$, $x\succ y=N(x)y$ and $x\vee y=-N(xy)$, then $(A, \prec , \succ , \vee )$ 
is an NS-algebra, and in particular the new multiplication defined on $A$ by $x\ast y=xN(y)+N(x)y-N(xy)$ is associative. 
Basic examples (see \cite{carinena}) are obtained by taking a fixed element $a\in A$ and defining $N_1, N_2:A\rightarrow A$ 
by $N_1(x)=ax$ and $N_2(x)=xa$, for all $x\in A$; it turns out that $N_1, N_2$ are Nijenhuis operators and in each case 
the new multiplication $\ast $ as above boils down to $x\ast y=xay$, for all $x, y\in A$. This property can be regarded also in the following (converse) way: the fact that the new multiplication on $A$ given by $x\ast y=xay$ is associative (usually, this 
new operation $\ast $ is said to be a ''perturbation'' of the old multiplication of $A$ via the element $a$)
can be obtained as a consequence of a property of Nijenhuis operators (or, alternatively, that it can be given a Nijenhuis operator 
interpretation). 

Let us mention that one can define NS-algebras corresponding to other classes of algebras than associative, for instance corresponding to Lie or Leibniz algebras (see \cite{daslie}, \cite{dasleibniz}) and, much more generally, to any class of algebras defined by multilinear relations (see \cite{opv}). 

Hom-type and BiHom-type algebras are certain algebraic structures (of growing interest in recent years) whose study began in some early papers such as \cite{JDS}, \cite{DS}, \cite{ms} and more recently \cite{gmmp}, and can be roughly described as being defined by some identities obtained by twisting a classical algebraic identity (such as associativity) by one or two maps. For instance, 
a BiHom-associative algebra $(A, \mu , \alpha , \beta )$ is an algebra $(A, \mu )$, with notation $\mu :A\otimes A\rightarrow A$, 
$\mu (x\otimes y)=xy$, together with two (multiplicative with respect to $\mu $)
commuting linear maps (called structure maps) $\alpha , \beta :A\rightarrow A$ 
such that $\alpha (x)(yz)=(xy)\beta (z)$ for all $x, y, z\in A$. There exist BiHom analogues of many types of algebras, in particular 
of (tri)dendriform algebras, infinitesimal bialgebras etc (see for instance \cite{lmmp1}, \cite{lmmp2}, \cite{lmmp3}, \cite{usjgeomphys} and references therein). 
Examples of (Bi)Hom-type algebras can be obtained from classical types of algebras by a procedure called ''Yau twisting''. 

The BiHom analogue of the ''perturbations'' mentioned above has been introduced in \cite{usjgeomphys} as follows. 
Let $(A, \mu , \alpha , \beta )$ be a BiHom-associative algebra and let $a\in A$ be such that $\alpha ^2(a)=\beta ^2(a)=a$. Define a new operation on $A$ by $x\ast y=\alpha (x)(\alpha (a)y)$; then $(A, \ast , \alpha ^2, \beta ^2)$ is a BiHom-associative algebra.

The starting point of this paper was to look for a ''Nijenhuis operator interpretation'' of this fact. One can notice that 
Nijenhuis operators defined by the relation (\ref{Nijen}) can be considered on any type of algebra (not necessarily associative), so we were trying to find a Nijenhuis operator on $(A, \mu , \alpha , \beta )$ depending on the given element $a$ and that would lead to the operation $\ast $, but we failed.  It turns out (just as it happened before with a certain context in which one was forced to consider a generalized version of Rota-Baxter operators, see \cite{canadian}) that the solution to this problem was to consider a generalized version of Nijenhuis operators on BiHom-associative algebras (defined by the axioms (\ref{extracom1})-(\ref{genNijsup2}) below). And indeed, the operators $N_1, N_2:A\rightarrow A$, $N_1(x)=\alpha (a)x$ and 
$N_2(x)=x\alpha (a)$, for $x\in A$, are such generalized Nijenhuis operators from which one can obtain the multiplication $\ast $ in a certain way. 

We were then led to introduce the concept of BiHom-NS-algebra, the BiHom analogue of Leroux's and Uchino's NS-algebras. 
They generalize BiHom-(tri)dendriform algebras, and it turns out that the generalized Nijenhuis operators that we introduced lead to BiHom-NS-algebras. We define as well the BiHom analogue of twisted Rota-Baxter operators and prove that they also lead to BiHom-NS-algebras and that, moreover, just as in the classical case in \cite{uchino}, there is an adjunction between BiHom-NS-algebras and twisted Rota-Baxter operators. 

\section{Preliminaries}\label{sec0}
\setcounter{equation}{0}

We work over a base field $\Bbbk $. All
algebras, linear spaces etc. will be over $\Bbbk $; unadorned $\otimes $
means $\otimes_{\Bbbk}$. Unless otherwise specified, the
algebras that will appear in what follows are
\emph{not} supposed to be associative or 
unital, and the multiplication $\mu :A\otimes
A\rightarrow A$ of an algebra $(A, \mu )$ is denoted by $\mu
(a\otimes a^{\prime })=aa^{\prime }$. 
For the composition of two maps $f$
and $g$, we will write either $g\circ f$ or simply $gf$. For the identity
map on a linear space $V$ we will use the notation $id_V$.

\begin{definition} (\cite{leroux})
\label{Def:NS-alg}An NS-algebra $\left( A,\prec ,\succ ,\vee \right) $
is a $\Bbbk$-linear space $A$ equipped with linear maps $\prec ,\succ ,\vee
:A\otimes A\rightarrow A$ satisfying the following relations (for all $%
x,y,z\in A$):%
\begin{eqnarray}
&&\left( x\prec y\right)  \prec z=x\prec \left( y\ast z\right) ,  \label{NS1}
\\
&&\left( x\succ y\right)  \prec z=x\succ \left( y\prec z\right) ,\quad
\label{NS2} \\
&&\left( x\ast y\right)  \succ z=x\succ \left( y\succ z\right),   \label{NS3}
\\
&&\left( x\vee y\right)  \prec z+\left( x\ast y\right) \vee z=x\succ \left(
y\vee z\right) +x\vee \left( y\ast z\right),   \label{NS4}
\end{eqnarray}%
where%
\begin{eqnarray}
&&x\ast y=x\prec y+x\succ y+x\vee y.  \label{NS*}
\end{eqnarray}
A morphism $f:(A,\prec ,\succ ,\vee )\rightarrow (A^{\prime },\prec ^{\prime
},\succ ^{\prime },\vee ^{\prime })$ of NS-algebras is a linear map $%
f:A\rightarrow A^{\prime }$ satisfying $f(x\prec y)=f(x)\prec ^{\prime }f(y)$%
, $f(x\succ y)=f(x)\succ ^{\prime }f(y)$ and $f(x\vee y)=f(x)\vee ^{\prime
}f(y)$, for all $x,y\in A$. An immediate consequence is that
\begin{eqnarray}
f\left( x\ast y\right) =f\left( x\right) \ast ^{^{\prime }}f\left( y\right), \;\;\; \forall \; x, y\in A.
\label{fstar}
\end{eqnarray}
\end{definition}
\begin{definition}
(\cite{gmmp}) A BiHom-associative algebra is a
4-tuple $\left( A,\mu ,\alpha ,\beta \right) $, where $A$ is a $\Bbbk $%
-linear space, $\alpha :A\rightarrow A$, $\beta :A\rightarrow A$ and $\mu
:A\otimes A\rightarrow A$ are linear maps, with notation $\mu (x\otimes y)
=xy$, for all $x, y\in A$, satisfying the following conditions, for all $x,
y, z\in A$:
\begin{gather}
\alpha \circ \beta =\beta \circ \alpha , \\
\alpha (xy) =\alpha (x)\alpha (y) \text{ and }\beta (xy)=\beta (x)\beta (y)
,\quad \text{(multiplicativity)}  \label{eqalfabeta} \\
\alpha (x)(yz)=(xy)\beta (z).\quad \text{(BiHom-associativity)}
\label{eqasso}
\end{gather}

The maps $\alpha $ and $\beta $ (in this order) are called the structure maps of $A$.

A morphism $f:(A,\mu _{A},\alpha _{A},\beta _{A})\rightarrow (B,\mu
_{B},\alpha _{B},\beta _{B})$ of BiHom-associative algebras is a linear map $%
f:A\rightarrow B$ such that $\alpha _{B}\circ f=f\circ \alpha _{A}$, $\beta
_{B}\circ f=f\circ \beta _{A}$ and $f\circ \mu _{A}=\mu _{B}\circ (f\otimes
f)$.
\end{definition}
\begin{definition}(\cite{lmmp1}) \label{bimodule}
Let $(A, \mu _A , \alpha _A, \beta _A)$ be a BiHom-associative algebra and $(M, \alpha _M, \beta _M)$ a triple where $M$
is a $\Bbbk$-linear space and $\alpha _M, \beta _M:M \rightarrow M$ are commuting linear maps.\\
(i) $(M, l, \alpha _M, \beta _M)$ is a left $A$-module if
we have a linear map $l:A\otimes M\rightarrow M$, $x\otimes m\mapsto x\cdot m$, such that
$\alpha _M(x\cdot m)=\alpha _A(x)\cdot \alpha _M(m)$,
$\beta _M(x\cdot m)=\beta _A(x)\cdot \beta _M(m)$ and
\begin{eqnarray}
&&\alpha _A(x)\cdot (x^{\prime }\cdot m)=(xx^{\prime })\cdot \beta _M(m),\;\;\;\;\forall \;\;x, x'\in A, \;m\in M.
\label{lmod}
\end{eqnarray}
(ii) $(M, r, \alpha _M, \beta _M)$ is a right $A$-module if
we have a linear map $r:M\otimes A\rightarrow M$, $m\otimes x\mapsto m\cdot x$, for which  
$\alpha _M(m\cdot x)=\alpha _M(m)\cdot \alpha _A(x)$,
$\beta _M(m\cdot x)=\beta _M(m)\cdot \beta _A(x)$ and
\begin{eqnarray}
&&\alpha _M(m)\cdot (xx')=(m\cdot x)\cdot \beta _A(x'), \;\;\;\;\forall \;\;x, x'\in A, \;m\in M.
\label{rmod}
\end{eqnarray}
(iii) If $(M, l, \alpha _M, \beta _M)$ is
a left $A$-module and $(M, r, \alpha _M, \beta _M)$ is a right $A$-module, with notation as above, 
then $(M, l, r, \alpha _M, \beta _M)$
 is called an $A$-bimodule if
\begin{eqnarray}
&&\alpha _A(x)\cdot (m\cdot x')=(x\cdot m)\cdot \beta _A(x'), \;\;\;\;\forall \;\;x, x'\in A, \;m\in M. \label{bimod}
\end{eqnarray}
\end{definition}
\begin{definition} (\cite{lmmp1})
A BiHom-tridendriform algebra is a 6-tuple $(A,\prec ,\succ
,\cdot ,\alpha ,\beta )$, where $A$ is a $\Bbbk$-linear space and $\prec ,\succ
,\cdot :A\otimes A\rightarrow A$ and $\alpha ,\beta :A\rightarrow A$ are
linear maps such that:
\begin{eqnarray}
&&\alpha \circ \beta =\beta \circ \alpha ,  \label{BiHomtridend1} \\
&&\alpha (x\prec y)=\alpha (x)\prec \alpha (y),~\alpha (x\succ y)=\alpha
(x)\succ \alpha (y),~\alpha (x\cdot y)=\alpha (x)\cdot \alpha (y),
\label{BiHomtridend4} \\
&&\beta (x\prec y)=\beta (x)\prec \beta (y),~\beta (x\succ y)=\beta (x)\succ
\beta (y),~\beta (x\cdot y)=\beta (x)\cdot \beta (y),  \label{BiHomtridend5}
\\
&&(x\prec y)\prec \beta (z)=\alpha (x)\prec (y\prec z+y\succ z+y\cdot z),
\label{BiHomtridend8} \\
&&(x\succ y)\prec \beta (z)=\alpha (x)\succ (y\prec z),
\label{BiHomtridend9} \\
&&\alpha (x)\succ (y\succ z)=(x\prec y+x\succ y+x\cdot y)\succ \beta (z),
\label{BiHomtridend10} \\
&&\alpha (x)\cdot (y\succ z)=(x\prec y)\cdot \beta (z),
\label{BiHomtridend11} \\
&&\alpha (x)\succ (y\cdot z)=(x\succ y)\cdot \beta (z),
\label{BiHomtridend12} \\
&&\alpha (x)\cdot (y\prec z)=(x\cdot y)\prec \beta (z),
\label{BiHomtridend13} \\
&&\alpha (x)\cdot (y\cdot z)=(x\cdot y)\cdot \beta (z),
\label{BiHomtridend14}
\end{eqnarray}%
for all $x,y,z\in A$. The maps $\alpha $ and $\beta $ (in this order) are called the
structure maps of $A$.

A BiHom-tridendriform algebra $(A,\prec ,\succ
,\cdot ,\alpha ,\beta )$ for which $x\cdot y=0$ for all $x, y\in A$ is called a BiHom-dendriform algebra. 
\end{definition}
\section{BiHom-NS-algebras}\label{sec1}
\setcounter{equation}{0}
We begin by introducing the BiHom analogue of classical NS-algebras. 
\begin{definition}\label{DefBHNS}
A BiHom-NS-algebra is a 6-tuple $\left( A,\prec ,\succ ,\vee ,\alpha ,\beta
\right) $ consisting of a $\Bbbk$-linear space $A$ equipped with linear maps $\prec
,\succ ,\vee :A\otimes A\rightarrow A$ and $\alpha, \beta :A\rightarrow A$
satisfying the following conditions (for all $x, y, z\in A$):
\begin{eqnarray}
&&\alpha \circ \beta  =\beta \circ \alpha ,   \label{BiHomNS0} \\
&&\alpha \left( x\prec y\right)  =\alpha \left( x\right) \prec \alpha \left(
y\right) ,\alpha \left( x\succ y\right) =\alpha \left( x\right) \succ \alpha
\left( y\right) ,\alpha \left( x\vee y\right) =\alpha \left( x\right) \vee
\alpha \left( y\right),   \label{BiHomNS1} \\
&&\beta \left( x\prec y\right)  =\beta \left( x\right) \prec \beta \left(
y\right) ,\beta \left( x\succ y\right) =\beta \left( x\right) \succ \beta
\left( y\right) ,\beta \left( x\vee y\right) =\beta \left( x\right) \vee
\beta \left( y\right),   \label{BiHomNS2} \\
&&\left( x\prec y\right)  \prec \beta \left( z\right) =\alpha \left(
x\right) \prec \left( y\ast z\right) ,  \label{BiHomNS3} \\
&&\left( x\succ y\right)  \prec \beta \left( z\right) =\alpha \left(
x\right) \succ \left( y\prec z\right) ,\quad   \label{BiHomNS4} \\
&&\left( x\ast y\right)  \succ \beta \left( z\right) =\alpha \left( x\right)
\succ \left( y\succ z\right),   \label{BiHomNS5} \\
&&\left( x\vee y\right)  \prec \beta \left( z\right) +\left( x\ast y\right)
\vee \beta \left( z\right) =\alpha \left( x\right) \succ \left( y\vee
z\right) +\alpha \left( x\right) \vee \left( y\ast z\right),
\label{BiHomNS6}
\end{eqnarray}
where
\begin{eqnarray}
&&x\ast y=x\prec y+x\succ y+x\vee y.  \label{BiHomNS*}
\end{eqnarray}

The maps $\alpha $ and $\beta $ (in this order) are called the
structure maps of $A$.

A morphism $f:(A,\prec ,\succ ,\vee , \alpha , \beta )\rightarrow (A^{\prime },\prec ^{\prime
},\succ ^{\prime },\vee ^{\prime }, \alpha ^{\prime}, \beta ^{\prime})$ of BiHom-NS-algebras is a linear map $
f:A\rightarrow A^{\prime }$ satisfying $\alpha ^{\prime}\circ f=f\circ \alpha $, $\beta ^{\prime}\circ f=f\circ \beta $,
$f(x\prec y)=f(x)\prec ^{\prime }f(y)$%
, $f(x\succ y)=f(x)\succ ^{\prime }f(y)$ and $f(x\vee y)=f(x)\vee ^{\prime
}f(y)$, for all $x,y\in A$.
\end{definition}

Similarly to the characterization of BiHom-dendriform algebras in terms of bimodules from \cite{lmmp1}, Proposition 3.16, 
we characterize BiHom-NS-algebras as follows.
\begin{proposition}\label{BHNStoBHA}
Let $\left( A,\prec ,\succ ,\vee ,\alpha ,\beta \right) $ be a 6-tuple where $A$ is a 
$\Bbbk$-linear space and $\prec
,\succ ,\vee :A\otimes A\rightarrow A$ and $\alpha, \beta :A\rightarrow A$ are linear maps such that $\alpha $ and $\beta $ are 
multiplicative with respect to $\prec $, $\succ $ and $\vee $. Define a new multiplication on $A$ by 
$x\ast y=x\prec y+x\succ y+x\vee y$, for all $x, y\in A$. Then 
$\left( A,\prec ,\succ ,\vee , \alpha , \beta \right) $ is a  BiHom-NS-algebra if and only if 
$\left( A,\ast ,\alpha ,\beta \right) $ is a BiHom-associative algebra, denoted by $A_{bhas}$, and 
$(A, \succ , \prec, \alpha , \beta )$ is an $A_{bhas}$-bimodule (notation for bimodules as in Definition \ref{bimodule}). 
\end{proposition}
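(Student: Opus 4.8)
The plan is to translate each of the seven defining axioms of a BiHom-NS-algebra into a statement about the operation $\ast$ and the bimodule structure maps $\succ$ (as left action) and $\prec$ (as right action), and then check that the two lists coincide axiom by axiom. First I would observe that axiom (\ref{BiHomNS0}) is exactly the commutation $\alpha\beta=\beta\alpha$ needed on both sides, and that multiplicativity of $\alpha,\beta$ with respect to $\prec,\succ,\vee$ is, on the one hand, part of the hypothesis of the proposition and, on the other hand, exactly what is needed to say that $\alpha,\beta$ are multiplicative with respect to $\ast$ (so $(A,\ast,\alpha,\beta)$ makes sense as a candidate BiHom-associative algebra) and that they are the structure maps $\alpha_M=\alpha$, $\beta_M=\beta$ compatible with the left action $\succ$ and right action $\prec$ in the sense of Definition \ref{bimodule}. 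This disposes of (\ref{BiHomNS0}), (\ref{BiHomNS1}), (\ref{BiHomNS2}) at once.

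Next I would handle the ``module'' axioms. The left-module axiom (\ref{lmod}) for the action $x\cdot m=x\succ m$ reads $\alpha(x)\succ(y\succ z)=(x\ast y)\succ\beta(z)$, which is precisely (\ref{BiHomNS5}). The right-module axiom (\ref{rmod}) for the action $m\cdot x=m\prec x$ reads $\alpha(x)\prec(y\ast z)=(x\prec y)\prec\beta(z)$, which is precisely (\ref{BiHomNS3}). The bimodule compatibility (\ref{bimod}), namely $\alpha(x)\succ(y\prec z)=(x\succ y)\prec\beta(z)$, is precisely (\ref{BiHomNS4}). So three more axioms are accounted for, and each is a genuine ``iff'' at the level of the single identity. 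What remains is to show that, granting all of the above, BiHom-associativity of $(A,\ast,\alpha,\beta)$, i.e. $\alpha(x)\ast(y\ast z)=(x\ast y)\ast\beta(z)$, is equivalent to the last axiom (\ref{BiHomNS6}) together with (\ref{BiHomNS3})--(\ref{BiHomNS5}).

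The heart of the argument is therefore this last equivalence. I would expand $\alpha(x)\ast(y\ast z)-(x\ast y)\ast\beta(z)$ using (\ref{BiHomNS*}) into the nine ``$\succ$-type'' terms, nine ``$\prec$-type'' terms and nine ``$\vee$-type'' terms, and then sort them according to which of the three ``outer'' operations $\prec,\succ,\vee$ appears last. The terms whose outermost operation is $\succ$ are exactly $\alpha(x)\succ(y\succ z)-(x\ast y)\succ\beta(z)$, which vanishes by (\ref{BiHomNS5}); the terms whose outermost operation is $\prec$ collect to $\alpha(x)\prec(y\ast z)-(x\prec y)\prec\beta(z)-(x\succ y)\prec\beta(z)+\alpha(x)\succ(y\prec z)$ — wait, more carefully: the $\prec$-outermost part of $(x\ast y)\ast\beta(z)$ is $(x\ast y)\prec\beta(z)=(x\prec y)\prec\beta(z)+(x\succ y)\prec\beta(z)+(x\vee y)\prec\beta(z)$, while the $\prec$-outermost part of $\alpha(x)\ast(y\ast z)$ is $\alpha(x)\prec(y\ast z)$; using (\ref{BiHomNS3}) to cancel $(x\prec y)\prec\beta(z)$ against part of $\alpha(x)\prec(y\ast z)$ and (\ref{BiHomNS4}) to rewrite $(x\succ y)\prec\beta(z)=\alpha(x)\succ(y\prec z)$, one is left precisely with the difference of the two sides of (\ref{BiHomNS6}) (with the $\succ$-outermost residual term $\alpha(x)\succ(y\vee z)$ coming from the left and the $\vee$-outermost terms $(x\ast y)\vee\beta(z)$ and $\alpha(x)\vee(y\ast z)$ appearing as well). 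Grouping these remaining terms and matching signs, BiHom-associativity of $\ast$ becomes equivalent to (\ref{BiHomNS6}) modulo (\ref{BiHomNS3})--(\ref{BiHomNS5}). Conversely, reading the same computation backwards, (\ref{BiHomNS3})--(\ref{BiHomNS6}) together yield $\alpha(x)\ast(y\ast z)=(x\ast y)\ast\beta(z)$.

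The main obstacle is purely bookkeeping: one must be careful that every one of the $27$ terms in the expansion of $\alpha(x)\ast(y\ast z)-(x\ast y)\ast\beta(z)$ is placed in exactly one of the three groups and that the equivalences used are genuine ``iff'' statements rather than one-directional implications — i.e. that no axiom is used to kill a term that would be needed in the converse direction. Since (\ref{BiHomNS3}), (\ref{BiHomNS4}) and (\ref{BiHomNS5}) each appear already as separate ``module'' conditions in the statement, they are available in both directions, and the only identity left to match is (\ref{BiHomNS6}); this makes the equivalence clean. I would present the forward direction (NS $\Rightarrow$ associative bimodule) first by the term-sorting above, then remark that each manipulation is reversible to get the converse, and finally note that the morphism conditions match trivially by (\ref{fstar})-type reasoning, though strictly speaking the statement only concerns the objects.
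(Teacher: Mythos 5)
Your proposal is correct and follows essentially the same route as the paper: it identifies (\ref{BiHomNS3}), (\ref{BiHomNS4}), (\ref{BiHomNS5}) with the right-module, bimodule-compatibility and left-module axioms of Definition \ref{bimodule}, and obtains the equivalence of (\ref{BiHomNS6}) with BiHom-associativity of $\ast$ by expanding $\alpha(x)\ast(y\ast z)-(x\ast y)\ast\beta(z)$ and cancelling via those three identities, exactly as in the paper's forward computation and its ``subtract (\ref{BiHomNS3})--(\ref{BiHomNS5}) from BiHom-associativity'' converse. The only blemish is the harmless miscount (the full expansion has $18$, not $27$, terms), which does not affect the argument.
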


\begin{proof} Assume first that $\left( A,\prec ,\succ ,\vee , \alpha , \beta \right) $ is a  BiHom-NS-algebra. 
It is easy to see that $\alpha $ and $\beta $ are multiplicative with respect to $\ast $, so we only check
$\left( \ref{eqasso}\right) $. We compute, for all $x,y, z \in A$: \\[2mm]
${\;\;\;\;\;}$
$\alpha (x)\ast (y\ast z)$
\begin{eqnarray*}
&\overset{\left( \ref{BiHomNS*}\right) }{=}&\alpha
\left( x\right)  \succ \left( y\prec z+y\succ z+y\vee z\right)
 +\alpha \left( x\right)  \prec \left( y\ast z\right) +\alpha \left( x\right) \vee \left( y\ast z\right)  \\
&=& \alpha \left( x\right) \succ \left( y\prec z\right) +\alpha \left(
x\right) \succ \left( y\succ z\right) +\alpha \left( x\right) \succ \left(
y\vee z\right)\\
 &&+\alpha \left( x\right)  \prec \left( y\ast z\right) + \alpha \left( x\right) \vee (y\ast z) \\
&\overset{\left( \ref{BiHomNS3}\right) ,\left( \ref{BiHomNS4}\right) ,\left( %
\ref{BiHomNS5}\right) }{=}&\left( x\ast y\right)  \succ \beta \left(
z\right) +\left( x\succ y\right) \prec \beta \left( z\right) +\left[ \alpha
\left( x\right) \succ \left( y\vee z\right) +\alpha \left( x\right) \vee
(y\ast z)\right]  \\
&&+\left( x\prec y\right)  \prec \beta \left( z\right)  \\
&\overset{\left(\ref{BiHomNS6}\right)}{=}&\left( x\ast y\right)  \succ \beta \left(
z\right) +\left( x\succ y\right) \prec \beta \left( z\right) +\left[ \left(
x\vee y\right) \prec \beta \left( z\right) +\left( x\ast y\right) \vee \beta
\left( z\right) \right]  \\
&&+\left( x\prec y\right)  \prec \beta \left( z\right) \\
&=&\left( x\ast
y\right) \succ \beta \left( z\right) +\left( x\prec y +
x\succ y +x\vee y \right) \prec \beta \left( z\right)
\\
&&+ \left( x\ast y\right) \vee \beta
\left( z\right)\\
&\overset{\left( \ref{BiHomNS*}\right) }{=}&\left( x\ast y\right)  \prec
\beta \left( z\right) +\left( x\ast y\right) \succ \beta \left( z\right)
+\left( x\ast y\right) \vee \beta \left( z\right) =\left( x\ast y\right)
\ast \beta \left( z\right),
\end{eqnarray*}
proving that $(A, \ast , \alpha , \beta )$ is BiHom-associative. The fact that $(A, \succ , \prec, \alpha , \beta )$ is an $A_{bhas}$-bimodule follows immediately from (\ref{BiHomNS3}),  (\ref{BiHomNS4}) and (\ref{BiHomNS5}). 

Conversely, (\ref{BiHomNS3}),  (\ref{BiHomNS4}) and (\ref{BiHomNS5}) follow from the fact that 
$(A, \succ , \prec, \alpha , \beta )$ is an $A_{bhas}$-bimodule, while (\ref{BiHomNS6}) is obtained by subtracting 
(\ref{BiHomNS3}),  (\ref{BiHomNS4}) and (\ref{BiHomNS5}) from the BiHom-associativity condition 
$\alpha (x)\ast (y\ast z)=(x\ast y)\ast \beta (z)$ written explicitely for $\ast =\prec +\succ +\vee$. 
\end{proof}

BiHom-NS-algebras can be obtained by Yau twisting from classical NS-algebras as follows.
\begin{proposition}
\label{YauNS} Let $(A,\prec ,\succ ,\vee )$ be an NS-algebra and $\alpha
,\beta :A\rightarrow A$ two commuting NS-algebra morphisms. Define $%
\prec _{(\alpha ,\beta )},\succ _{(\alpha ,\beta )}, \vee _{(\alpha , \beta )}:A\otimes A\rightarrow A$
by
\begin{equation*}
x\prec _{(\alpha ,\beta )}y=\alpha (x)\prec \beta (y), \;\;\;\;\;\;\;x\succ
_{(\alpha ,\beta )}y=\alpha (x)\succ \beta (y), \;\;\;\;\;\;\;x\vee
_{(\alpha ,\beta )}y=\alpha \left( x\right) \vee \beta \left( y\right), 
\end{equation*}%
for all $x,y\in A$. Then $A_{(\alpha ,\beta )}:=(A,\prec _{(\alpha ,\beta
)},\succ _{(\alpha ,\beta )},\vee _{(\alpha ,\beta )},\alpha ,\beta )$ is a
BiHom-NS-algebra, called the Yau twist of $A$. Moreover, assume that $%
(A^{\prime },\prec ^{\prime },\succ ^{\prime },\vee ^{\prime })$ is another
NS-algebra and $\alpha ^{\prime },\beta ^{\prime }:A^{\prime }\rightarrow
A^{\prime }$ are two commuting NS-algebra morphisms and $f:A\rightarrow
A^{\prime }$ is a morphism of NS-algebras satisfying $f\circ \alpha =\alpha
^{\prime }\circ f$ and $f\circ \beta =\beta ^{\prime }\circ f$. Then $%
f:A_{(\alpha ,\beta )}\rightarrow A_{(\alpha ^{\prime },\beta ^{\prime
})}^{\prime }$ is a morphism of BiHom-NS-algebras.
\end{proposition}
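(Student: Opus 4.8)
The plan is to verify directly that the Yau-twisted operations satisfy all seven axioms \eqref{BiHomNS0}--\eqref{BiHomNS6} of a BiHom-NS-algebra, exploiting throughout that $\alpha$ and $\beta$ are algebra morphisms commuting with each other. First I would record the auxiliary formula for the twisted total operation: since $\ast$ on $A$ is $\prec+\succ+\vee$, one gets $x\ast_{(\alpha,\beta)}y=\alpha(x)\ast\beta(y)$, so the twisted $\ast$ is itself a Yau twist of $\ast$. The commutativity axiom \eqref{BiHomNS0} is immediate from the hypothesis $\alpha\beta=\beta\alpha$, and the multiplicativity axioms \eqref{BiHomNS1}--\eqref{BiHomNS2} follow because $\alpha$ (resp.\ $\beta$) is an NS-algebra morphism and commutes with $\beta$ (resp.\ $\alpha$): for instance $\alpha(x\prec_{(\alpha,\beta)}y)=\alpha(\alpha(x)\prec\beta(y))=\alpha^2(x)\prec\alpha\beta(y)=\alpha(x)\prec_{(\alpha,\beta)}\alpha(y)$.

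Next I would check the four "twisted NS" identities \eqref{BiHomNS3}--\eqref{BiHomNS6}, each of which reduces to the corresponding classical identity \eqref{NS1}--\eqref{NS4} after pushing $\alpha,\beta$ through. Take \eqref{BiHomNS3} as the model: the left-hand side is
\begin{equation*}
(x\prec_{(\alpha,\beta)}y)\prec_{(\alpha,\beta)}\beta(z)=\alpha(\alpha(x)\prec\beta(y))\prec\beta^2(z)=(\alpha^2(x)\prec\alpha\beta(y))\prec\beta^2(z),
\end{equation*}
while the right-hand side is
\begin{equation*}
\alpha(x)\prec_{(\alpha,\beta)}(y\ast_{(\alpha,\beta)}z)=\alpha^2(x)\prec\beta(\alpha(y)\ast\beta(z))=\alpha^2(x)\prec(\alpha\beta(y)\ast\beta^2(z));
\end{equation*}
applying \eqref{NS1} to the elements $\alpha^2(x),\alpha\beta(y),\beta^2(z)$ gives equality. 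The same substitution pattern handles \eqref{BiHomNS4} via \eqref{NS2}, \eqref{BiHomNS5} via \eqref{NS3} (using $x\ast_{(\alpha,\beta)}y=\alpha(x)\ast\beta(y)$ together with multiplicativity of $\alpha$ on $\ast$), and \eqref{BiHomNS6} via \eqref{NS4}, where one matches the two summands on each side term by term after inserting the appropriate powers of $\alpha,\beta$. This establishes that $A_{(\alpha,\beta)}$ is a BiHom-NS-algebra.

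For the morphism statement, I would simply check the five conditions in the definition of a morphism of BiHom-NS-algebras. The two commutation conditions $f\circ\alpha=\alpha'\circ f$ and $f\circ\beta=\beta'\circ f$ are hypotheses. For the three operation-compatibility conditions, compute e.g.\ $f(x\prec_{(\alpha,\beta)}y)=f(\alpha(x)\prec\beta(y))=f(\alpha(x))\prec' f(\beta(y))=\alpha'(f(x))\prec'\beta'(f(y))=f(x)\prec'_{(\alpha',\beta')}f(y)$, using that $f$ is an NS-algebra morphism in the middle step and the intertwining relations in the last; $\succ$ and $\vee$ are identical.

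I do not expect a genuine obstacle here — the argument is entirely mechanical, the only points requiring a little care being the bookkeeping of which power of $\alpha$ and which of $\beta$ lands on which variable, and remembering to use $x\ast_{(\alpha,\beta)}y=\alpha(x)\ast\beta(y)$ (rather than re-expanding into the three operations) whenever $\ast$ appears inside \eqref{BiHomNS5} and \eqref{BiHomNS6}. Everything else is a direct translation of Yau's twisting principle, already used for many other BiHom structures, to the NS setting.
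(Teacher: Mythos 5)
Your proposal is correct and follows essentially the same route as the paper: record $x\ast_{(\alpha,\beta)}y=\alpha(x)\ast\beta(y)$, then reduce each twisted axiom \eqref{BiHomNS3}--\eqref{BiHomNS6} to the corresponding classical identity \eqref{NS1}--\eqref{NS4} applied to $\alpha^{2}(x),\alpha\beta(y),\beta^{2}(z)$, using that $\alpha,\beta$ are commuting NS-morphisms (hence preserve $\ast$), and verify the morphism statement by a direct check. The only difference is one of detail: the paper writes out \eqref{BiHomNS4}--\eqref{BiHomNS6} in full where you indicate the identical substitution pattern, which is a legitimate abbreviation.
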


\begin{proof} One can easily see that, if we define $\ast _{(\alpha , \beta )}:=\prec _{(\alpha ,\beta
)}+\succ _{(\alpha ,\beta )}+\vee _{(\alpha ,\beta )}$, then
\begin{equation}
x\ast _{(\alpha ,\beta )}y=\alpha \left( x\right) \ast \beta \left( y\right), \;\;\; \forall \;\;x, y\in A.
\label{staralphabeta}
\end{equation}%
The conditions  $\left( \ref{BiHomNS1}\right)$, $\left( \ref{BiHomNS2}\right)$ are easy to prove and  left to the reader.
We verify $\left( \ref{BiHomNS3}\right)$, $\left( \ref{BiHomNS4}\right)$ and $\left( \ref{BiHomNS5}\right)$:
\begin{eqnarray*}
\left( x\prec _{(\alpha ,\beta )}y\right)  \prec_{(\alpha ,\beta )} \beta
\left( z\right) &=&\alpha \left( \alpha \left( x\right) \prec \beta \left(
y\right) \right) \prec \beta ^{2}\left( z\right) =\left( \alpha ^{2}\left(
x\right) \prec \alpha \beta \left( y\right) \right) \prec \beta ^{2}\left(
z\right)  \\
&\overset{\left( \ref{NS1}\right) }{=}&\alpha ^{2}\left( x\right)  \prec
\left( \alpha \beta \left( y\right) \ast \beta ^{2}\left( z\right) \right)
=\alpha ^{2}\left( x\right) \prec \left( \beta \alpha \left( y\right) \ast
\beta ^{2}\left( z\right) \right) \\
&\overset{\left( \ref{fstar}\right) }{=}&
\alpha ^{2}\left( x\right) \prec \beta \left( \alpha \left( y\right) \ast
\beta \left( z\right) \right)  \\
&\overset{\left( \ref{staralphabeta}\right) }{=}&\alpha ^{2}\left( x\right)
\prec \beta \left( y\ast _{(\alpha ,\beta )}z\right) =\alpha \left(
x\right) \prec _{(\alpha ,\beta )}\left( y\ast _{(\alpha ,\beta )}z\right),
\end{eqnarray*}
\begin{eqnarray*}
\left( x\succ _{(\alpha ,\beta )}y\right)  \prec_{(\alpha ,\beta )} \beta
\left( z\right) &=&\alpha \left( x\succ _{(\alpha ,\beta )}y\right) \prec
\beta ^{2}\left( z\right) =\alpha \left( \alpha \left( x\right) \succ \beta
\left( y\right) \right) \prec \beta ^{2}\left( z\right)  \\
&=&\left( \alpha ^{2}\left( x\right) \succ \alpha \beta \left( y\right)
\right) \prec \beta ^{2}\left( z\right) =\left( \alpha ^{2}\left( x\right)
\succ \beta \alpha \left( y\right) \right) \prec \beta ^{2}\left( z\right)\\
&\overset{\left( \ref{NS2}\right) }{=} &
\alpha ^{2}\left( x\right)  \succ \beta \left( \alpha \left(
y\right) \prec \beta \left( z\right) \right)  \\
&=&\alpha ^{2}\left(
x\right) \succ \beta \left( y\prec _{(\alpha ,\beta )}z\right) =\alpha
\left( x\right) \succ _{(\alpha ,\beta )} \left( y\prec _{(\alpha ,\beta
)}z\right) ,
\end{eqnarray*}%
\begin{eqnarray*}
\left( x\ast _{(\alpha ,\beta )}y\right)  \succ_{(\alpha ,\beta )} \beta
\left( z\right) &=&\alpha \left( x\ast _{(\alpha ,\beta )}y\right) \succ \beta
^{2}\left( z\right) \overset{\left( \ref{staralphabeta}\right) }{=}\alpha
\left( \alpha \left( x\right) \ast \beta \left( y\right) \right) \succ \beta
^{2}\left( z\right)  \\
&\overset{\left( \ref{fstar}\right) }{=}&\left( \alpha ^{2}\left( x\right)
\ast \alpha \beta \left( y\right) \right)  \succ \beta ^{2}\left( z\right)
=\left( \alpha ^{2}\left( x\right) \ast \beta \alpha \left( y\right) \right)
\succ \beta ^{2}\left( z\right) \\
&\overset{\left( \ref{NS3}\right) }{=}&\alpha
^{2}\left( x\right) \succ \left( \beta \alpha \left( y\right) \succ \beta
^{2}\left( z\right) \right)
=\alpha ^{2}\left( x\right) \succ \beta \left( \alpha \left( y\right)
\succ \beta \left( z\right) \right) \\
&=&\alpha ^{2}\left( x\right) \succ \beta
\left( y\succ _{(\alpha ,\beta )}z\right) =\alpha \left( x\right) \succ
_{(\alpha ,\beta )}\left( y\succ _{(\alpha ,\beta )}z\right) .
\end{eqnarray*}%
Now we check the condition $\left( \ref{BiHomNS6}\right)$:\\[2mm]
${\;\;\;\;\;}$
$\left( x\vee _{(\alpha ,\beta )}y\right)  \prec_{(\alpha ,\beta )} \beta
\left( z\right) +\left( x\ast _{(\alpha ,\beta )}y\right) \vee _{(\alpha
,\beta )}\beta \left( z\right)$
\begin{eqnarray*}
&=&\alpha \left( x\vee _{(\alpha ,\beta
)}y\right) \prec \beta ^{2}\left( z\right) +\alpha \left( x\ast _{(\alpha
,\beta )}y\right) \vee \beta ^{2}\left( z\right)  \\
&\overset{\left( \ref{staralphabeta}\right) }{=}&\alpha \left( \alpha \left(
x\right) \vee \beta \left( y\right) \right)  \prec \beta ^{2}\left(
z\right) +\alpha \left( \alpha \left( x\right) \ast \beta \left( y\right)
\right) \vee \beta ^{2}\left( z\right)  \\
&\overset{\left( \ref{fstar}\right) }{=}&\left( \alpha ^{2}\left( x\right) \vee \alpha \beta \left( y\right)
\right) \prec \beta ^{2}\left( z\right) +\left( \alpha ^{2}\left( x\right)
\ast \alpha \beta \left( y\right) \right) \vee \beta ^{2} \left( z\right)  \\
&\overset{\left( \ref{NS4}\right) }{=}&\alpha ^{2}\left( x\right)  \succ
\left( \alpha \beta \left( y\right) \vee \beta ^{2}\left( z\right) \right)
+\alpha ^{2}\left( x\right) \vee \left( \alpha \beta \left( y\right) \ast
\beta ^{2}\left( z\right) \right)  \\
&\overset{\left( \ref{fstar}\right) }{=}&\alpha ^{2}\left( x\right) \succ \left( \beta \alpha \left( y\right) \vee
\beta ^{2}\left( z\right) \right) +\alpha ^{2}\left( x\right) \vee \beta
\left( \alpha \left( y\right) \ast \beta \left( z\right) \right)  \\
&=&\alpha ^{2}\left( x\right) \succ \beta \left( \alpha \left( y\right) \vee
\beta \left( z\right) \right) +\alpha ^{2}\left( x\right) \vee \beta \left(
\alpha \left( y\right) \ast \beta \left( z\right) \right)  \\
&\overset{\left( \ref{staralphabeta}\right) }{=}&\alpha ^{2}\left( x\right)
\succ \beta \left( y\vee _{(\alpha ,\beta )}z\right) +\alpha ^{2}\left(
x\right) \vee \beta \left( y\ast _{(\alpha ,\beta )}z\right)  \\
&=&\alpha \left( x\right) \succ _{(\alpha ,\beta )}\left( y\vee _{(\alpha
,\beta )}z\right) +\alpha \left( x\right) \vee _{(\alpha ,\beta )}\left(
y\ast _{(\alpha ,\beta )}z\right) .
\end{eqnarray*}%
Thus, we proved that $A_{(\alpha , \beta )}$ is a BiHom-NS-algebra.

The last statement is easy to prove and left to the reader.
\end{proof}

Similarly to the classical case (see \cite{uchinoarXiv}), BiHom-NS-algebras are actually a generalization of BiHom-tridendriform algebras, as shown by the following  result. 
\begin{proposition}\label{prop2.4}
Let $(A,\prec ,\succ ,\cdot ,\alpha ,\beta )$ be a BiHom-tridendriform
algebra. Then $(A,\prec ,\succ ,\cdot ,\alpha ,\beta )$ is a
BiHom-NS-algebra.
\end{proposition}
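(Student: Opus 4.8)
The plan is to set $\vee := \cdot$ and check that the $6$-tuple $(A,\prec ,\succ ,\vee ,\alpha ,\beta )$ thus obtained satisfies the defining conditions (\ref{BiHomNS0})--(\ref{BiHomNS6}) of a BiHom-NS-algebra. With this identification the operation $x\ast y = x\prec y + x\succ y + x\vee y$ from (\ref{BiHomNS*}) is exactly $x\prec y + x\succ y + x\cdot y$, which is precisely the combination appearing on the right-hand sides of the tridendriform axioms (\ref{BiHomtridend8}) and (\ref{BiHomtridend10}); I would record this at the outset so that the translation between the two sets of axioms is transparent.

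The structural conditions are immediate: (\ref{BiHomNS0}) is (\ref{BiHomtridend1}), while (\ref{BiHomNS1}) and (\ref{BiHomNS2}) are (\ref{BiHomtridend4}) and (\ref{BiHomtridend5}). Likewise, (\ref{BiHomNS3}) is (\ref{BiHomtridend8}) read through the abbreviation $\ast$, (\ref{BiHomNS4}) is literally (\ref{BiHomtridend9}), and (\ref{BiHomNS5}) is (\ref{BiHomtridend10}) read through $\ast$. Hence the only identity that requires an actual computation is (\ref{BiHomNS6}).

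For (\ref{BiHomNS6}) (with $\vee = \cdot$) I would begin from its left-hand side $(x\cdot y)\prec \beta (z) + (x\ast y)\cdot \beta (z)$, distribute $\ast$ in the second term, and then apply the four ``mixed'' axioms of the BiHom-tridendriform structure: (\ref{BiHomtridend13}) to $(x\cdot y)\prec \beta (z)$, (\ref{BiHomtridend11}) to $(x\prec y)\cdot \beta (z)$, (\ref{BiHomtridend12}) to $(x\succ y)\cdot \beta (z)$, and (\ref{BiHomtridend14}) to $(x\cdot y)\cdot \beta (z)$. Collecting the three resulting terms of the shape $\alpha (x)\cdot(-)$ yields $\alpha (x)\cdot (y\prec z + y\succ z + y\cdot z) + \alpha (x)\succ (y\cdot z) = \alpha (x)\cdot (y\ast z) + \alpha (x)\succ (y\cdot z)$, which is the right-hand side of (\ref{BiHomNS6}) after substituting $\vee = \cdot$ back in. Alternatively, one could deduce the statement from Proposition \ref{BHNStoBHA}, but this would first require checking separately that $(A,\ast ,\alpha ,\beta )$ is BiHom-associative and that $(A,\succ ,\prec ,\alpha ,\beta )$ is an $A$-bimodule, which comes down to the same verifications.

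There is no genuine obstacle here; the argument is entirely routine. The only points demanding a little care are matching the explicit sums $y\prec z + y\succ z + y\cdot z$ in the tridendriform axioms with the $\ast$-notation of the BiHom-NS axioms, and making sure that the bookkeeping in (\ref{BiHomNS6}) uses precisely axioms (\ref{BiHomtridend11})--(\ref{BiHomtridend14}) and leaves no leftover terms.
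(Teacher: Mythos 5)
Your proposal is correct and follows essentially the same route as the paper: you observe that axioms (\ref{BiHomtridend1})--(\ref{BiHomtridend10}) coincide with (\ref{BiHomNS0})--(\ref{BiHomNS5}) under $\vee=\cdot$, and you verify (\ref{BiHomNS6}) by exactly the computation the paper performs, using (\ref{BiHomtridend13}), (\ref{BiHomtridend11}), (\ref{BiHomtridend12}) and (\ref{BiHomtridend14}). Your closing remark about deducing the result via Proposition \ref{BHNStoBHA} is also in line with the paper, which records precisely that alternative (via Propositions \ref{BHNStoBHA}, \ref{characttridend} and \ref{bimalgbim}) after introducing bimodule algebras.
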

\begin{proof}
Note that the first 6 axioms defining a BiHom-tridendriform algebra ((\ref{BiHomtridend1})--(\ref{BiHomtridend10})) are
identical to the first 6 axioms defining a BiHom-NS-algebra ((\ref{BiHomNS0})--(\ref{BiHomNS5})), so we only have to check
the relation (\ref{BiHomNS6}). We compute: \\[2mm]
${\;\;\;\;\;}$
$\left( x\cdot y\right)  \prec \beta \left( z\right) +\left( x\ast y\right)
\cdot \beta \left( z\right)$
\begin{eqnarray*}
&=&\left( x\cdot y\right) \prec \beta \left( z\right) +\left( x\succ
y+x\prec y+x\cdot y\right) \cdot \beta \left( z\right)  \\
&\overset{\left( \ref{BiHomtridend13}\right) }{=}&\alpha (x)\cdot (y \prec
z)+\left( x\succ y\right) \cdot \beta \left( z\right) +\left( x\prec
y\right) \cdot \beta \left( z\right) +\left( x\cdot y\right) \cdot \beta
\left( z\right)  \\
&\overset{\left( \ref{BiHomtridend12}\right) ,\left( \ref{BiHomtridend11}%
\right) ,\left( \ref{BiHomtridend14}\right) }{=}&\alpha (x)\cdot (y \prec
z)+\alpha (x)\succ (y\cdot z)+\alpha (x)\cdot (y\succ z)+\alpha (x)\cdot
(y\cdot z) \\
&=&\alpha (x)\cdot \left( y\prec z+ y\succ z + y\cdot z \right) +\alpha
(x)\succ (y\cdot z) \\
&=&\alpha \left(
x\right) \cdot \left( y\ast z\right) + \alpha \left( x\right) \succ \left( y\cdot z\right),
\end{eqnarray*}
finishing the proof.
\end{proof}

Our aim now is to give a more conceptual proof of Proposition \ref{prop2.4}, inspired by \cite{opv}. Along the way we will introduce 
bimodule algebras over BiHom-associative algebras and use them to obtain a characterization of BiHom-tridendriform algebras. 

We begin by recalling the following observation from \cite{lmmp1}.
Let $(A, \mu _A, \alpha _A, \beta _A)$ be a BiHom-associative algebra, $M$ be a $\Bbbk $-linear space, $\alpha _M, \beta _M
:M\rightarrow M$  be two commuting linear maps and $l:A\otimes M\rightarrow M$, $x\otimes m\mapsto x\cdot m$ and $r:M\otimes A\rightarrow M$, $m\otimes x\mapsto m\cdot x$ be two linear maps. On the direct sum $A\oplus M$ consider the algebra structure 
defined by 
\begin{eqnarray*}
&(x, m)(x', m')=(xx', x\cdot m'+m\cdot x'), \;\;\; 
\forall \; x, x'\in A, \;m, m'\in M
\end{eqnarray*}
(the split null extension), denoted by $A\oplus _0M$. Then $A\oplus _0M$ is a 
BiHom-associative algebra with structure maps $\alpha , \beta :A\oplus _0M\rightarrow A\oplus _0M$, 
$\alpha ((x, m))=(\alpha _A(x), \alpha _M(m))$ and $\beta ((x, m))=(\beta _A(x), \beta _M(m))$, if and only if 
$(M, l, r, \alpha _M, \beta _M)$ is an $A$-bimodule. 

Assume again that $(A, \mu _A, \alpha _A, \beta _A)$ is a BiHom-associative algebra, $M$ is a $\Bbbk $-linear space, $\alpha _M, \beta _M
:M\rightarrow M$ are two commuting linear maps and $l:A\otimes M\rightarrow M$, $x\otimes m\mapsto x\cdot m$ and $r:M\otimes A\rightarrow M$, $m\otimes x\mapsto m\cdot x$ are two linear maps, but assume that we also have a linear map $\bullet 
:M\otimes M\rightarrow M$. 
\begin{definition}\label{bimalg}
We say that $(M, \bullet, l, r, \alpha _M, \beta _M)$ is an $A$-bimodule algebra if $(A\oplus M, *_{\bullet }, \alpha , \beta )$ 
is a BiHom-associative algebra, where 
$$(x, m)*_{\bullet }(x', m')=(xx', x\cdot m'+m\cdot x'+m\bullet m')$$
and $\alpha ((x, m))=(\alpha _A(x), \alpha _M(m))$, $\beta ((x, m))=(\beta _A(x), \beta _M(m))$, for all $x, x'\in A$ 
and $m, m'\in M$. 
\end{definition}

We show now that this concept is indeed the BiHom analogue of the corresponding concept introduced for associative algebras 
in \cite{pacific}. 
\begin{proposition}\label{bimalgbim}
With the above notations, $(M, \bullet, l, r, \alpha _M, \beta _M)$ is an $A$-bimodule algebra if and only if 
$(M, l, r, \alpha _M, \beta _M)$ is an $A$-bimodule, $(M, \bullet, \alpha _M, \beta _M)$ is a BiHom-associative algebra 
and the following relations hold, for all $x\in A$ and $m, m'\in M$:
\begin{eqnarray}
&&\alpha _A(x)\cdot (m\bullet m')=(x\cdot m)\bullet \beta _M(m'), \label{extra1} \\
&&\alpha _M(m)\bullet (m'\cdot x)=(m\bullet m')\cdot \beta _A(x), \label{extra2} \\
&&\alpha _M(m)\bullet (x\cdot m')=(m\cdot x)\bullet \beta _M(m'). \label{extra3}
\end{eqnarray}
In particular, $A$-bimodule algebra implies $A$-bimodule. 
\end{proposition}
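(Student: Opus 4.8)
The plan is to unwind Definition~\ref{bimalg} by writing out the BiHom-associativity condition $\alpha((x,m))*_{\bullet}((x',m')*_{\bullet}(x'',m''))=((x,m)*_{\bullet}(x',m'))*_{\bullet}\beta((x'',m''))$ explicitly in the split space $A\oplus M$, and then comparing the two sides componentwise. The first component is automatically $\alpha_A(x)(x'x'')=(xx')\beta_A(x'')$, which holds because $(A,\mu_A,\alpha_A,\beta_A)$ is BiHom-associative; so the entire content sits in the $M$-component. The multiplicativity of $\alpha$ and $\beta$ with respect to $*_{\bullet}$ is equivalent to $\alpha_M,\beta_M$ being multiplicative for $l$, $r$ and $\bullet$ simultaneously; this is the standard bookkeeping already present in the split null extension discussion preceding Definition~\ref{bimalg}, so I would dispatch it quickly and concentrate on the associativity identity.

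Expanding the $M$-component of $\alpha((x,m))*_{\bullet}((x',m')*_{\bullet}(x'',m''))$ gives a sum of seven terms, obtained by distributing over the three pieces $x\cdot$, $\cdot x$, $\bullet$ at each of the two layers: namely $\alpha_A(x)\cdot(x'\cdot m'')$, $\alpha_A(x)\cdot(m'\cdot x'')$, $\alpha_A(x)\cdot(m'\bullet m'')$, $\alpha_M(m)\cdot(x'x'')$, $\alpha_M(m)\bullet(x'\cdot m'')$, $\alpha_M(m)\bullet(m'\cdot x'')$, $\alpha_M(m)\bullet(m'\bullet m'')$. The $M$-component of the right-hand side gives the symmetric seven terms with $\beta$'s on the right: $(x\cdot m')\cdot\beta_A(x'')$, $(xx')\cdot\beta_M(m'')$, $(m\cdot x')\cdot\beta_A(x'')$ wait—more precisely $(x\cdot m')\bullet\beta_M(m'')$, $(m\cdot x')\bullet\beta_M(m'')$, $(m\bullet m')\cdot\beta_A(x'')$, $(m\bullet m')\bullet\beta_M(m'')$, together with the genuine bimodule terms $\alpha_A(x)\cdot(x'\cdot m'')$-type. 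The key observation is that the seven terms split into groups according to how many of the three arguments lie in $M$ versus in $A$, and since $x,x',x''$ and $m,m',m''$ are independent, the single identity in $A\oplus M$ decouples into four independent identities: (0 elements of $M$, trivial), (one element of $M$, in each of the three slots), and these three are precisely the left-module axiom~(\ref{lmod}), the bimodule axiom~(\ref{bimod}), and the right-module axiom~(\ref{rmod}); then (two elements of $M$, in each of the three slot-pairs) give exactly (\ref{extra1}), (\ref{extra2}), (\ref{extra3}); and (three elements of $M$) gives BiHom-associativity of $(M,\bullet,\alpha_M,\beta_M)$.

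So the proof is really a matching argument: write the associativity identity in $A\oplus M$, observe that specializing $m,m',m''$ and $x,x',x''$ to be zero in various patterns isolates each of the seven identities, and conversely that if all seven hold then the full identity holds by adding them back up. The ``in particular'' clause is then immediate, since the three module axioms are among the conditions equivalent to being a bimodule algebra.

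The main obstacle — really the only delicate point — is to be careful about the grading/decoupling argument: one must justify that because $A$ and $M$ are independent summands (and the structure maps preserve the decomposition), an identity of the form $\sum(\text{terms})=0$ in $A\oplus M$ that holds for all tuples forces each homogeneous piece to vanish separately. I would phrase this cleanly once and then simply tabulate which of the listed relations corresponds to which choice of which variables lie in $M$; everything else is routine expansion using the definition of $*_{\bullet}$ and the multiplicativity of $\alpha,\beta$. No clever trick is needed beyond the split-null-extension viewpoint already set up in the paper.
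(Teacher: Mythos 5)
Your proposal is correct and follows essentially the same route as the paper: expand the BiHom-associativity identity for $*_{\bullet}$ in the split null extension $A\oplus M$ and decouple it (by setting suitable variables to zero, i.e.\ by the number of arguments lying in $M$) into the three bimodule axioms, the three compatibilities (\ref{extra1})--(\ref{extra3}), and the BiHom-associativity of $\bullet$, with the converse obtained by summing these back up. The paper's proof is exactly this specialization argument, also only sketched, so no further comparison is needed.
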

\begin{proof}
We only sketch the proof and leave the details to the reader. The BiHom-associativity condition for 
$(A\oplus M, *_{\bullet }, \alpha , \beta )$ written for elements $(x, m), (x', m'), (x'', m'')\in A\oplus M$ turns out 
to be equivalent to 
\begin{eqnarray*}
&&\alpha _A(x)\cdot (x'\cdot m'')+\alpha _A(x)\cdot (m'\cdot x'')+\alpha _A(x)\cdot (m'\bullet m'')+
\alpha _M(m)\cdot (x'x'')\\
&&\;\;\;\;\;\;\;\;+\alpha _M(m)\bullet (x'\cdot m'')+\alpha _M(m)\bullet (m'\cdot x'')+\alpha _M(m)\bullet (m'\bullet m'')\\
&&=(xx')\cdot \beta _M(m'')+(x\cdot m')\cdot \beta _A(x'')+(m\cdot x')\cdot \beta _A(x'')+(m\bullet m')\cdot \beta _A(x'')\\
&&\;\;\;\;\;\;\;\;+(x\cdot m')\bullet \beta _M(m'')+(m\cdot x')\bullet \beta _M(m'')+(m\bullet m')\bullet \beta _M(m'').
\end{eqnarray*}
By taking $x=x'=x''=0$ we obtain $\alpha _M(m)\bullet (m'\bullet m'')=(m\bullet m')\bullet \beta _M(m'')$. By taking 
$x''=0$, $m=m'=0$, then $x=0$, $m'=m''=0$ and then $x'=0$, $m=m''=0$, we respectively obtain 
$\alpha _A(x)\cdot (x'\cdot m'')=(xx')\cdot \beta _M(m'')$, $\alpha _M(m)\cdot (x'x'')=(m\cdot x')\cdot \beta _A(x'')$ and 
$\alpha _A(x)\cdot (m'\cdot x'')=(x\cdot m')\cdot \beta _A(x'')$, so $(M, l, r, \alpha _M, \beta _M)$ is an $A$-bimodule. 
Similarly one obtains the relations (\ref{extra1}), (\ref{extra2}) and (\ref{extra3}). 

The converse is immediate. 
\end{proof}

Similarly to the characterization of BiHom-dendriform algebras in \cite{lmmp1}, Proposition 3.16 and the characterization of BiHom-NS-algebras in Proposition \ref{BHNStoBHA}, we can characterize now BiHom-tridendriform algebras.
\begin{proposition}\label{characttridend}
Let $\left( A,\prec ,\succ ,\cdot ,\alpha ,\beta \right) $ be a 6-tuple where $A$ is a 
$\Bbbk$-linear space and $\prec
,\succ ,\cdot :A\otimes A\rightarrow A$ and $\alpha, \beta :A\rightarrow A$ are linear maps such that $\alpha $ and $\beta $ are 
multiplicative with respect to $\prec $, $\succ $ and $\cdot $. Define a new multiplication on $A$ by 
$x\ast y=x\prec y+x\succ y+x\cdot y$, for all $x, y\in A$. Then 
$\left( A,\prec ,\succ ,\cdot , \alpha , \beta \right) $ is a  BiHom-tridendriform algebra if and only if 
$\left( A,\ast ,\alpha ,\beta \right) $ is a BiHom-associative algebra and 
$(A, \cdot , \succ , \prec, \alpha , \beta )$ is an $\left( A,\ast ,\alpha ,\beta \right) $-bimodule algebra 
(notation for bimodule algebras as in Definition \ref{bimalg}). 
\end{proposition}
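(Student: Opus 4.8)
The plan is to reduce this to Proposition~\ref{bimalgbim}, which already decomposes the notion of an $A$-bimodule algebra into a bimodule structure, a BiHom-associative product, and the three extra relations (\ref{extra1})--(\ref{extra3}). Write $A_{bhas}=(A,\ast,\alpha,\beta)$ and view the $6$-tuple $(A,\cdot,\succ,\prec,\alpha,\beta)$ as the datum $(M,\bullet,l,r,\alpha_M,\beta_M)$ of Definition~\ref{bimalg} with $M=A$, $\bullet=\cdot$, left action $l$ given by $x\cdot m:=x\succ m$, right action $r$ given by $m\cdot x:=m\prec x$, and $\alpha_M=\alpha$, $\beta_M=\beta$ (this is the same assignment of $\succ$ and $\prec$ to the left, resp.\ right, action used for the $A_{bhas}$-bimodule in Proposition~\ref{BHNStoBHA}). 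First I would set up the resulting translation dictionary: the left-module axiom (\ref{lmod}) becomes (\ref{BiHomtridend10}); the right-module axiom (\ref{rmod}) becomes (\ref{BiHomtridend8}); the bimodule compatibility (\ref{bimod}) becomes (\ref{BiHomtridend9}); BiHom-associativity of $(M,\bullet,\alpha_M,\beta_M)$ becomes (\ref{BiHomtridend14}); and the extra relations (\ref{extra1}), (\ref{extra2}), (\ref{extra3}) become, respectively, (\ref{BiHomtridend12}), (\ref{BiHomtridend13}), (\ref{BiHomtridend11}). The commutativity $\alpha\circ\beta=\beta\circ\alpha$ appears as (\ref{BiHomtridend1}) on the tridendriform side and is built into being a BiHom-associative algebra / a bimodule algebra on the other side, while multiplicativity of $\alpha$ and $\beta$ with respect to $\prec$, $\succ$, $\cdot$ (which also yields multiplicativity with respect to $\ast$ and to $\bullet$, $l$, $r$) is the standing hypothesis and matches (\ref{BiHomtridend4})--(\ref{BiHomtridend5}).

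With this dictionary in hand, the proof splits according to the two implications, modulo knowing that $A_{bhas}$ is BiHom-associative so that Proposition~\ref{bimalgbim} is applicable. For the ``only if'' direction I would first invoke Proposition~\ref{prop2.4} to see that a BiHom-tridendriform algebra is a BiHom-NS-algebra (with $\vee=\cdot$), whence $(A,\ast,\alpha,\beta)$ is BiHom-associative by Proposition~\ref{BHNStoBHA} (alternatively, one can obtain this directly by expanding $\alpha(x)\ast(y\ast z)-(x\ast y)\ast\beta(z)$ into nine summands on each side and matching them against (\ref{BiHomtridend8})--(\ref{BiHomtridend14})); then the dictionary shows that the tridendriform axioms (\ref{BiHomtridend8})--(\ref{BiHomtridend14}) give exactly the three conditions of Proposition~\ref{bimalgbim}, so $(A,\cdot,\succ,\prec,\alpha,\beta)$ is an $A_{bhas}$-bimodule algebra. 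For the ``if'' direction, BiHom-associativity of $A_{bhas}$ is assumed outright, Proposition~\ref{bimalgbim} unpacks the bimodule-algebra hypothesis into the bimodule/associativity/extra relations, and the dictionary turns these back into (\ref{BiHomtridend8})--(\ref{BiHomtridend14}); together with the hypothesis on multiplicativity and the commutativity of $\alpha,\beta$ this establishes all the axioms of a BiHom-tridendriform algebra.

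I do not expect a genuine obstacle here; as in Propositions~\ref{BHNStoBHA} and \ref{bimalgbim}, the verifications are routine and can be left to the reader. The one point that requires care is purely bookkeeping: one must consistently assign $\succ$ to the \emph{left} action and $\prec$ to the \emph{right} action of the bimodule, so that the asymmetric relations (\ref{extra1})--(\ref{extra3}) land on the correct tridendriform axioms (\ref{BiHomtridend12}), (\ref{BiHomtridend13}), (\ref{BiHomtridend11}) rather than on a permutation of them; reversing this choice would produce a superficially different (and incorrect) statement.
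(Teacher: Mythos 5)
Your proof is correct and follows essentially the same route as the paper: both arguments reduce the bimodule-algebra condition via Proposition \ref{bimalgbim} and then match the tridendriform axioms one-to-one with BiHom-associativity of $\cdot$ together with (\ref{lmod}), (\ref{rmod}), (\ref{bimod}) and (\ref{extra1})--(\ref{extra3}), using exactly the dictionary you describe (with $\succ$ as left and $\prec$ as right action). The only divergence is how BiHom-associativity of $\ast$ is obtained in the ``only if'' direction: the paper cites \cite{lmmp1}, Proposition 3.14, whereas your primary route through Proposition \ref{prop2.4} would be circular in spirit (the paper afterwards uses Proposition \ref{characttridend} to reobtain Proposition \ref{prop2.4}), so your alternative of expanding $\alpha(x)\ast(y\ast z)-(x\ast y)\ast\beta(z)$ and summing the seven axioms (\ref{BiHomtridend8})--(\ref{BiHomtridend14}) is the preferable justification there.
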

\begin{proof}
Assume that $\left( A,\prec ,\succ ,\cdot , \alpha , \beta \right) $ is BiHom-tridendriform. The tuple 
$(A, \ast, \alpha , \beta )$ defines a  BiHom-associative algebra by \cite{lmmp1}, Proposition 3.14, so we only have to prove that 
$(A, \cdot , \succ , \prec, \alpha , \beta )$ is an $\left( A,\ast ,\alpha ,\beta \right) $-bimodule algebra. The fact that 
$(A, \cdot , \alpha , \beta )$ is BiHom-associative is equivalent to (\ref{BiHomtridend14}), while each of  conditions 
(\ref{lmod}), (\ref{rmod}), (\ref{bimod}), (\ref{extra1}),  (\ref{extra2}), (\ref{extra3}) is equivalent to one of the axioms 
(\ref{BiHomtridend8})-(\ref{BiHomtridend13}). These observations indicate also that the converse holds as well. 
\end{proof}

Thus, as a consequence of Proposition \ref{BHNStoBHA}, Proposition \ref{characttridend} and the fact from Proposition 
\ref{bimalgbim} that bimodule algebra implies bimodule, we reobtain Proposition \ref{prop2.4}.
\section{Twisted Rota-Baxter operators}\label{sec2}
\setcounter{equation}{0}
We introduce in this section the concept of twisted Rota-Baxter operator in the context of BiHom-associative algebras and show that there is an adjunction with BiHom-NS-algebras.

Let $(A, \mu _A, \alpha _A, \beta _A)$ be a BiHom-associative algebra and $(M, l, r, \alpha _M, \beta _M)$ an $A$-bimodule, 
with notation as in Definition \ref{bimodule}.  
Hochschild cohomology of $A$ with coefficients in $M$ has been introduced in \cite{das}. In particular, a 
Hochschild 2-cocycle on $A$ with values in $M$ 
 is a linear map $H:A\otimes A\rightarrow M$ satisfying  
(for all $x, y, z\in A$): 
\begin{eqnarray}
&& H\circ (\alpha_{A}\otimes \alpha_{A})=\alpha_{M}\circ H, \;\;\;\;\;H\circ (\beta_{A}\otimes \beta_{A})=\beta_{M}\circ H, 
\label{Hoc1}\\
&&\alpha_{A}(x)\cdot H(y, z)- H(xy, \beta_A(z))+ H(\alpha_{A}(x), yz)- H(x, y)\cdot \beta_{A}(z)=0. \label{cocycle}
\end{eqnarray}

Inspired by \cite{uchino}, we introduce the following concept:
\begin{definition}\label{DefTRB}
Let $(A, \mu _A, \alpha_{A} , \beta_{A} )$ be a BiHom-associative algebra, let $(M, l, r, \alpha_{M}, \beta_{M})$ be an $A$-bimodule 
and let $H:A\otimes A\rightarrow M$ be a Hochschild 2-cocycle. 
A $H$-twisted Rota-Baxter operator is a linear map $\pi:
M\rightarrow A$ satisfying the following conditions, for all $m, n\in M$: 
\begin{eqnarray}
&&\pi\circ \alpha_{M} =\alpha_{A} \circ \pi, \;\;\;\;\; \pi\circ \beta_{M} =\beta_{A} \circ \pi , \label{supTRB} \\
&&\pi(m) \pi(n)=\pi(\pi(m)\cdot n+ m\cdot \pi(n)+ H(\pi(m), \pi(n))).  \label{trb}
\end{eqnarray}
\end{definition}
\begin{example}
Let $(A, \mu, \alpha , \beta )$ be a BiHom-associative algebra. One can easily see that $(-\mu ):A\otimes A\rightarrow A$ is a 
Hochschild 2-cocycle on $A$ with values in the bimodule $(A, l, r, \alpha , \beta )$, where $l(x\otimes y)=r(x\otimes y)=xy$, 
for all $x, y\in A$. By analogy with the classical case (see \cite{rota}, \cite{uchino}), a $(-\mu )$-twisted Rota-Baxter 
operator is called a Reynolds operator. Thus, a Reynolds operator on $A$ is a linear map $R:A\rightarrow A$ commuting 
with $\alpha $ and $\beta $ and satisfying the relation 
\begin{eqnarray*}
&&R(x)R(y)=R(R(x)y+xR(y)-R(x)R(y)), \;\;\; \forall \; x, y\in A. 
\end{eqnarray*}
\end{example}

We prove now that twisted Rota-Baxter operators provide examples of BiHom-NS-algebras, extending the situation 
in the associative case in \cite{uchino}, Proposition 3.8. 
\begin{proposition} \label{TRB}
Let $(A, \mu _A, \alpha_{A} , \beta_{A} )$ be a BiHom-associative algebra, let $(M, l, r, \alpha_{M}, \beta_{M})$ be an 
$A$-bimodule, let $H:A\otimes A\rightarrow M$ be a Hochschild 2-cocycle and let 
$\pi:M\rightarrow A$ be a $H$-twisted Rota-Baxter operator. Define the following operations on $A$:
\begin{eqnarray*}
&&m\prec n=m\cdot \pi(n), \;\;\;\;\;
m\succ n=\pi(m)\cdot n, \;\;\;\;\; m\vee n=H(\pi(m), \pi(n)), 
\end{eqnarray*}
for all $m, n\in M$. 
Then $(M, \prec, \succ, \vee, \alpha_{M}, \beta_{M})$ is a BiHom-NS-algebra. 
\end{proposition}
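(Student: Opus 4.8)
The plan is to use Proposition \ref{BHNStoBHA} as the organizing principle: rather than checking the six BiHom-NS axioms \eqref{BiHomNS0}--\eqref{BiHomNS6} directly, I would verify that $(M, \ast, \alpha_M, \beta_M)$ is a BiHom-associative algebra and that $(M, \succ, \prec, \alpha_M, \beta_M)$ is a bimodule over it, where $m\ast n = m\prec n + m\succ n + m\vee n = m\cdot\pi(n) + \pi(m)\cdot n + H(\pi(m),\pi(n))$. First I would compute $\pi(m\ast n)$ and observe that the defining relation \eqref{trb} of a twisted Rota-Baxter operator says precisely $\pi(m)\pi(n) = \pi(m\ast n)$, i.e. $\pi$ becomes an algebra map $(M,\ast) \to (A,\mu_A)$; this identity, together with $\pi\circ\alpha_M = \alpha_A\circ\pi$ and $\pi\circ\beta_M = \beta_A\circ\pi$ from \eqref{supTRB}, will be the workhorse of every subsequent computation.

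The multiplicativity of $\alpha_M$, $\beta_M$ with respect to $\prec$, $\succ$, $\vee$ (hence with respect to $\ast$) follows routinely from the module-map conditions $\alpha_M(x\cdot m) = \alpha_A(x)\cdot\alpha_M(m)$ etc. in Definition \ref{bimodule}, the cocycle condition \eqref{Hoc1} for $H$, and \eqref{supTRB}. For BiHom-associativity of $\ast$ I would expand $\alpha_M(m)\ast(n\ast p)$ and $(m\ast n)\ast\beta_M(p)$, pushing $\pi$ through using $\pi(n\ast p) = \pi(n)\pi(p)$ and $\pi(m\ast n) = \pi(m)\pi(n)$, so that both sides become expressions in the bimodule actions of $A$ on $M$, the map $H$, and products in $A$; the difference of the two sides should collapse to zero using the bimodule axioms \eqref{lmod}, \eqref{rmod}, \eqref{bimod} for $M$, the BiHom-associativity of $A$, and the Hochschild 2-cocycle identity \eqref{cocycle} applied at the arguments $\pi(m), \pi(n), \pi(p)$. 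This is the computational heart of the proof.

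To finish, I would check that $(M, \succ, \prec, \alpha_M, \beta_M)$ is an $(M,\ast,\alpha_M,\beta_M)$-bimodule in the sense of Definition \ref{bimodule}: the left action is $m\triangleright n := m\succ n = \pi(m)\cdot n$, the right action is $n\triangleleft m := n\prec m = n\cdot\pi(m)$. The left-module axiom $\alpha_M(m)\triangleright(n\triangleright p) = (m\ast n)\triangleright\beta_M(p)$ unwinds to $\alpha_A(\pi(m))\cdot(\pi(n)\cdot p) = (\pi(m)\pi(n))\cdot\beta_M(p)$ after using $\pi(m\ast n) = \pi(m)\pi(n)$ and \eqref{supTRB}, which is exactly \eqref{lmod} for the $A$-module $M$; the right-module and bimodule compatibility axioms follow the same way from \eqref{rmod} and \eqref{bimod}. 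By Proposition \ref{BHNStoBHA}, these two facts together give that $(M,\prec,\succ,\vee,\alpha_M,\beta_M)$ is a BiHom-NS-algebra.

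The main obstacle is the verification of BiHom-associativity of $\ast$: this is where the Hochschild 2-cocycle condition \eqref{cocycle} must be invoked in exactly the right form, and one has to be careful that the mixed terms (those involving both $H$ and the module actions, such as $\pi(m)\cdot H(\pi(n),\pi(p))$ versus $H(\pi(m)\pi(n), \beta_A(\pi(p)))$) match up correctly after all instances of $\pi$ applied to $\ast$-products have been replaced by $\mu_A$-products via \eqref{trb}. Everything else is bookkeeping; the analogous associative-case computation in \cite{uchino}, Proposition 3.8 is a reliable guide, the only genuine new feature being the systematic insertion of $\alpha_A, \beta_A$ (and their $M$-counterparts) dictated by the BiHom-associativity and module axioms.
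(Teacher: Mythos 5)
Your proposal is correct, and it reaches the conclusion by a genuinely different organization than the paper. The paper proves Proposition \ref{TRB} by verifying the axioms (\ref{BiHomNS0})--(\ref{BiHomNS6}) one by one: (\ref{BiHomNS3})--(\ref{BiHomNS5}) are obtained by transporting the $A$-bimodule axioms (\ref{lmod}), (\ref{rmod}), (\ref{bimod}) through $\pi$ via (\ref{supTRB}) and (\ref{trb}), and (\ref{BiHomNS6}) is checked directly, collapsing to the cocycle identity (\ref{cocycle}) evaluated at $\pi(x),\pi(y),\pi(z)$. You instead take Proposition \ref{BHNStoBHA} as the key lemma and reduce the statement to two facts: $(M,\ast,\alpha_M,\beta_M)$ is BiHom-associative and $(M,\succ,\prec,\alpha_M,\beta_M)$ is a bimodule over it, with the single identity $\pi(m\ast n)=\pi(m)\pi(n)$ (which is exactly (\ref{trb})) doing the work. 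This goes through: your bimodule check is literally the same computation as the paper's verification of (\ref{BiHomNS3})--(\ref{BiHomNS5}), and in the BiHom-associativity of $\ast$ the six terms not involving $H$ cancel in pairs by (\ref{rmod}), (\ref{bimod}), (\ref{lmod}), leaving precisely $\alpha_A(\pi(m))\cdot H(\pi(n),\pi(p))+H(\alpha_A(\pi(m)),\pi(n)\pi(p))=H(\pi(m),\pi(n))\cdot\beta_A(\pi(p))+H(\pi(m)\pi(n),\beta_A(\pi(p)))$, i.e.\ (\ref{cocycle}); the axiom (\ref{BiHomNS6}) is then recovered by the converse direction already established inside Proposition \ref{BHNStoBHA}. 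What your route buys is the conceptual picture, parallel to Uchino's original treatment: $\pi$ becomes an algebra morphism $(M,\ast)\rightarrow A$, and the NS structure on $M$ is exactly the bimodule-plus-cocycle data pulled back along $\pi$. What the paper's direct check buys is explicit NS identities together with a clear record of which hypothesis enters which axiom. One bookkeeping point you should keep explicit: Proposition \ref{BHNStoBHA} assumes multiplicativity of $\alpha_M,\beta_M$ with respect to $\prec,\succ,\vee$, so the ``routine'' verification via the module compatibilities, (\ref{Hoc1}) and (\ref{supTRB}) is a genuine part of the argument; also note that BiHom-associativity of $A$ itself is never needed, only the bimodule axioms and (\ref{cocycle}), so your mention of it is harmless but superfluous.
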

\begin{proof}
We need to check the relations (\ref{BiHomNS0})-(\ref{BiHomNS6}). By Definition \ref{bimodule}, $\alpha _M$ and 
$\beta _M$ commute, so (\ref{BiHomNS0}) holds. The relations  (\ref{BiHomNS1}) and (\ref{BiHomNS2}) are easy to prove, 
by using the bimodule axioms and (\ref{Hoc1}), (\ref{supTRB}). Let now $x, y, z\in M$;  
to prove (\ref{BiHomNS3}), (\ref{BiHomNS4}) and (\ref{BiHomNS5}), we compute:
\begin{eqnarray*}
(x\prec y)\prec \beta_{M}(z)&=& (x\cdot \pi(y))\cdot \pi(\beta_{M}(z))
 \overset{\left( \ref{supTRB}\right) }{=}(x\cdot \pi(y))\cdot \beta_{A}(\pi(z))\\
&\overset{(\ref{rmod})}{=}& \alpha_{M}(x)\cdot (\pi(y)\pi(z))
\overset{(\ref{trb})}{=} \alpha_{M}(x)\cdot \pi(\pi(y)\cdot z + y\cdot \pi(z)+ H(\pi(y), \pi(z)))\\
&=& \alpha_{M}(x)\cdot \pi(y\succ z + y\prec z + y\vee z)
=\alpha_{M}(x)\prec (y\ast z), 
\end{eqnarray*}
\begin{eqnarray*}
(x\succ y)\prec \beta_{M}(z)&=& (\pi(x)\cdot y)\cdot \pi(\beta_{M}(z))
\overset{\left( \ref{supTRB}\right) }{=}(\pi(x)\cdot y)\cdot \beta_{A}(\pi(z))\\
&\overset{(\ref{bimod})}{=}& \alpha_{A}(\pi(x))\cdot (y\cdot \pi(z))
\overset{\left( \ref{supTRB}\right) }{=}\pi(\alpha_{M}(x))\cdot (y\cdot \pi(z))
=\alpha_{M}(x)\succ (y\prec z), 
\end{eqnarray*}
\begin{eqnarray*}
(x\ast y)\succ \beta_{M}(z)&=& (\pi(x)\cdot y + x\cdot \pi(y)+ H(\pi(x), \pi(y)))\succ \beta_{M}(z)\\
&=& \pi(\pi(x)\cdot y + x\cdot \pi(y)+ H(\pi(x), \pi(y)))\cdot \beta_{M}(z)
\overset{(\ref{trb})}{=}(\pi(x)\pi(y))\cdot \beta_{M}(z)\\
&\overset{(\ref{lmod})}{=}& \alpha_{A}(\pi(x))\cdot (\pi(y)\cdot z)
\overset{\left( \ref{supTRB}\right) }{=}\pi(\alpha_{M}(x))\cdot (\pi(y)\cdot z)
=\alpha_{M}(x)\succ (y\succ z).
\end{eqnarray*}
Finally, we check (\ref{BiHomNS6}):\\[2mm]
${\;\;\;}$
$\alpha_{M}(x)\succ (y\vee z)-(x\ast y)\vee \beta_{M}(z)+\alpha_{M}(x)\vee (y\ast z)-(x\vee y)\prec \beta_{M}(z)$
\begin{eqnarray*}
&=& \pi(\alpha_{M}(x))\cdot H(\pi(y), \pi(z))-(x\succ y + x\prec y + x\vee y)\vee \beta_{M}(z)\\
&& +\alpha_{M}(x)\vee (y\succ z + y\prec z + y\vee z)- H(\pi(x), \pi(y))\prec \beta_{M}(z)\\
&=& \pi(\alpha_{M}(x))\cdot H(\pi(y), \pi(z))-H(\pi(\pi(x)\cdot y + x\cdot \pi(y) + H(\pi(x), \pi(y))), \pi(\beta_{M}(z)))\\
&& +H(\pi(\alpha_{M}(x)), \pi(\pi(y)\cdot z + y\cdot \pi(z) + H(\pi(y), \pi(z))))- H(\pi(x), \pi(y))\cdot \pi(\beta_{M}(z))\\
&\overset{(\ref{trb})}{=}& \pi(\alpha_{M}(x))\cdot H(\pi(y), \pi(z))-H(\pi(x)\pi(y), \pi(\beta_{M}(z))\\
&& +H(\pi(\alpha_{M}(x)), \pi(y)\pi(z))
- H(\pi(x), \pi(y))\cdot \pi(\beta_{M}(z))\\
&\overset{\left( \ref{supTRB}\right) }{=}& \alpha_{A}(\pi(x))\cdot H(\pi(y), \pi(z))-H(\pi(x)\pi(y), \beta_{A}(\pi(z))\\
&&+H(\alpha_{A}(\pi(x)), \pi(y)\pi(z))
- H(\pi(x), \pi(y))\cdot \beta_{A}(\pi(z))
\overset{(\ref{cocycle})}{=}0,
\end{eqnarray*}
finishing the proof. 
\end{proof}
 \begin{lemma}\label{2Cocycle}
Let $(A,\prec ,\succ ,\vee,\alpha,\beta )$ be a BiHom-NS-algebra and consider the BiHom-associative algebra  $A_{bhas}$  
and the $A_{bhas}$-bimodule $(A, \succ , \prec , \alpha , \beta )$ as in Proposition \ref{BHNStoBHA}. Define the linear map 
$H:A\otimes A\rightarrow A$, $H (x,y) = x\vee y$, for all $x, y\in A$. 
Then $H$ is a Hochschild 2-cocycle on $A_{bhas}$ with values in $(A, \succ , \prec , \alpha , \beta )$. 
 \end{lemma}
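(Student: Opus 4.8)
I would verify the two defining conditions of a Hochschild 2-cocycle for $H(x,y)=x\vee y$ with respect to the $A_{bhas}$-bimodule $(A,\succ,\prec,\alpha,\beta)$, namely the compatibility with the structure maps \eqref{Hoc1} and the cocycle identity \eqref{cocycle}. The first, $H\circ(\alpha\otimes\alpha)=\alpha\circ H$ and $H\circ(\beta\otimes\beta)=\beta\circ H$, is immediate from the multiplicativity axioms \eqref{BiHomNS1} and \eqref{BiHomNS2} of the BiHom-NS-algebra, since $\alpha(x\vee y)=\alpha(x)\vee\alpha(y)$ and likewise for $\beta$. So the substance is the cocycle identity.

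**The cocycle identity.** Recall from Definition \ref{DefTRB} that \eqref{cocycle}, written for a bimodule with left action $l$ and right action $r$, reads
\begin{eqnarray*}
\alpha(x)\cdot H(y,z)-H(xy,\beta(z))+H(\alpha(x),yz)-H(x,y)\cdot\beta(z)=0,
\end{eqnarray*}
where now the product on $A$ is $\ast$ (the multiplication of $A_{bhas}$), the left action $l$ of $A_{bhas}$ on the module $A$ is $\succ$, and the right action $r$ is $\prec$. Substituting $H(x,y)=x\vee y$ turns this into
\begin{eqnarray*}
\alpha(x)\succ(y\vee z)-(x\ast y)\vee\beta(z)+\alpha(x)\vee(y\ast z)-(x\vee y)\prec\beta(z)=0,
\end{eqnarray*}
which is exactly the BiHom-NS-algebra axiom \eqref{BiHomNS6} after rearranging terms. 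Thus the proof amounts to: (1) observe \eqref{Hoc1} from \eqref{BiHomNS1}--\eqref{BiHomNS2}; (2) expand \eqref{cocycle} with the specific module structure from Proposition \ref{BHNStoBHA}, i.e.\ $l=\succ$, $r=\prec$, product $=\ast$; and (3) match the resulting expression term by term with \eqref{BiHomNS6}.

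**Main obstacle.** There is essentially no obstacle: the lemma is a reformulation of axiom \eqref{BiHomNS6} in the language of Hochschild cohomology. The only point requiring a moment's care is the bookkeeping of which map plays the role of the left action and which the right action in \eqref{cocycle}; one must use, consistently with Proposition \ref{BHNStoBHA}, that $(A,\succ,\prec,\alpha,\beta)$ is the bimodule (so $x\cdot m$ in \eqref{cocycle} is $x\succ m$ and $m\cdot x$ is $m\prec x$), and that the multiplication appearing inside $H(xy,\beta(z))$ and $H(\alpha(x),yz)$ is the associative product $\ast$ of $A_{bhas}$, not any of the three partial operations. Once this dictionary is fixed, \eqref{cocycle} is literally \eqref{BiHomNS6}.
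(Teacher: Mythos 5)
Your proposal is correct and matches the paper's own proof: the structure-map compatibility \eqref{Hoc1} follows from the multiplicativity axioms \eqref{BiHomNS1}--\eqref{BiHomNS2}, and the cocycle identity \eqref{cocycle}, written with product $\ast$, left action $\succ$, right action $\prec$ and $H=\vee$, is exactly the rearranged axiom \eqref{BiHomNS6}. Nothing further is needed.
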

 \begin{proof}
The relation (\ref{Hoc1}) for $H$ follows immediately from the the multiplicativity of $\alpha $ and $\beta $ with respect to 
the operation $\vee$. We need to check the  cocycle condition \eqref{cocycle}.
We have: \\[2mm]
${\;\;\;\;\;}$
$\alpha(x)\succ  H(y, z)- H(x\ast y, \beta (z))+ H (\alpha(x), y\ast z)- H(x, y)\prec \beta(z)$
\begin{eqnarray*}
&=& \alpha(x)\succ  (y \vee z)- (x\ast y)\vee \beta (z)+ \alpha(x)\vee (y\ast z)-  (x\vee y)\prec \beta(z)
\overset{(\ref{BiHomNS6})}{=}0,
\end{eqnarray*}
finishing the proof.
\end{proof}

Let $\bf{BHNS}$ denote the category of BiHom-NS-algebras, where the objects are BiHom-NS-algebras and the morphisms are as described  in Definition \ref{DefBHNS}. Let $\bf{TRB}$ denote the category of twisted Rota-Baxter operators, where the objects are the twisted Rota-Baxter operators described  in Definition \ref{DefTRB} and morphisms are defined by the commutative diagram

 $$
\xymatrix{
  A\otimes A \ar[d]_{H} \ar[r]^{\varphi\otimes \varphi}
                & A'\otimes A' \ar[d]^{H'}  \\
                M \ar[d]_{\pi} \ar[r]^{\psi}
                & M' \ar[d]^{\theta}  \\
    A \ar[r]_{\varphi}
                & A'\\           }   
                $$
and compatibility with linear maps.

\begin{theorem}
There is an adjoint pair of functors
   \begin{equation}
  F \colon \bf{BHNS} \rightleftarrows {\bf{TRB}} \colon G ,
   \end{equation}
   with the adjoint relation (for $(\pi , H)$ an object in $\bf{TRB}$)
$$ Hom_{\bf{TRB}}(F(A),(\pi,H))\cong Hom_{\bf{BHNS}}(A,G(\pi,H)).$$
\end{theorem}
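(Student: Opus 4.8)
The strategy is to make the functors $F$ and $G$ explicit and then verify that the obvious unit and counit maps are morphisms in the appropriate categories. First I would construct $G$: given an object $(\pi, H)$ in $\bf{TRB}$, consisting of a BiHom-associative algebra $(A,\mu_A,\alpha_A,\beta_A)$, an $A$-bimodule $(M,l,r,\alpha_M,\beta_M)$, a Hochschild $2$-cocycle $H$ and a $H$-twisted Rota-Baxter operator $\pi:M\to A$, we set $G(\pi,H):=(M,\prec,\succ,\vee,\alpha_M,\beta_M)$, the BiHom-NS-algebra produced in Proposition~\ref{TRB}, with $m\prec n=m\cdot\pi(n)$, $m\succ n=\pi(m)\cdot n$, $m\vee n=H(\pi(m),\pi(n))$. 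On morphisms, a morphism in $\bf{TRB}$ is the triple $(\varphi,\psi,\theta)$ of the displayed diagram (with $\theta=\varphi$ in fact, since the bottom square and the compatibility with structure maps force this), and one checks directly from the definitions of $\prec,\succ,\vee$ that $\psi:M\to M'$ is then a morphism of BiHom-NS-algebras; this is a routine diagram chase using the commutativity $H'\circ(\varphi\otimes\varphi)=\psi\circ H$, $\varphi\circ\pi=\pi'\circ\psi$ and the bimodule-morphism compatibilities.

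Next I would construct $F$. Given a BiHom-NS-algebra $(A,\prec,\succ,\vee,\alpha,\beta)$, Proposition~\ref{BHNStoBHA} gives the BiHom-associative algebra $A_{bhas}=(A,\ast,\alpha,\beta)$ together with the $A_{bhas}$-bimodule $(A,\succ,\prec,\alpha,\beta)$, and Lemma~\ref{2Cocycle} gives the Hochschild $2$-cocycle $H(x,y)=x\vee y$ on $A_{bhas}$ with values in that bimodule. Then I claim that $\pi:=\Id_A:A\to A_{bhas}$ (i.e. the identity map of the underlying space, viewed as going from the bimodule $A$ to the algebra $A_{bhas}$) is a $H$-twisted Rota-Baxter operator: condition \eqref{supTRB} is trivial since $\pi=\Id$, while \eqref{trb} reads $x\ast y=(\Id(x))\succ y+x\prec(\Id(y))+H(x,y)=x\succ y+x\prec y+x\vee y$, which is exactly the definition \eqref{BiHomNS*} of $\ast$. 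So $F(A):=(\Id_A, H)$ with the above ambient data is an object of $\bf{TRB}$. On morphisms, a BiHom-NS-morphism $f:A\to A'$ induces the triple $(f,f,f)$, and the three squares commute because $f$ preserves $\vee$ (top square), is linear over $\ast$ (middle square is just $f$ again), and is the identity-compatible choice on the bottom.

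Finally, for the adjunction I would exhibit the counit and unit and invoke the triangle identities, or — more economically — directly verify the hom-set bijection $\Hom_{\bf{TRB}}(F(A),(\pi,H))\cong\Hom_{\bf{BHNS}}(A,G(\pi,H))$ and check naturality in both variables. In one direction: a morphism $F(A)\to(\pi,H)$ is a triple $(\varphi,\psi,\varphi)$ with $\varphi:A\to A$ (where the source $A$ carries $\ast$), $\psi:A\to M$; composing with the canonical map $A\to G(\pi,H)$ — which on underlying spaces is again $\psi$ — one extracts a BiHom-NS-morphism $A\to G(\pi,H)$, and conversely a BiHom-NS-morphism $g:A\to M=G(\pi,H)$ gives back the triple $(\pi\circ g,\, g,\, \pi\circ g)$, using that $\pi\circ g$ is a BiHom-associative morphism $A_{bhas}\to A$ (this is where one uses \eqref{trb} for $\pi$ together with the NS-axioms, exactly as in the computation inside Proposition~\ref{TRB}). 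One checks these two assignments are mutually inverse and natural. The main obstacle I anticipate is purely bookkeeping: pinning down precisely what a morphism in $\bf{TRB}$ is (the paper only sketches it via the diagram plus ``compatibility with linear maps''), in particular confirming that the third arrow is forced to equal $\varphi$ and that the ambient bimodule and cocycle are required to match up, so that the two functors are well defined on morphisms; once the categorical data is unambiguous, each verification reduces to the identities already established in Proposition~\ref{BHNStoBHA}, Proposition~\ref{TRB} and Lemma~\ref{2Cocycle}.
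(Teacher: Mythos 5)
Your proposal follows essentially the same route as the paper: the paper's own proof is just the two-line observation that $G$ comes from Proposition \ref{TRB} and $F$ from taking $\pi=id_A:(A,\succ,\prec,\alpha,\beta)\rightarrow A_{bhas}$ as an $H$-twisted Rota-Baxter operator with $H(x,y)=x\vee y$ (Lemma \ref{2Cocycle}), and your construction of $F$, $G$ and the hom-set bijection $(\varphi,\psi)\mapsto\psi$, $g\mapsto(\pi\circ g,g)$ is exactly the detailed version of this. Your verification is in fact more explicit than the paper's, and the only blemishes are notational (the identification of the arrow $\theta$ in the morphism diagram and the reuse of the letter $A$ for the target algebra), which do not affect correctness.
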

\begin{proof}
The functor $G$ is given by Proposition \ref{TRB} and the functor $F$ is defined by the identity map 
$id_A:(A, \succ, \prec, \alpha , \beta )\rightarrow A_{bhas}$ as a twisted Rota-Baxter operator with a 2-cocycle defined as $H(x,y)=x\vee y$ for all $x,y\in A$ (Lemma \ref{2Cocycle}).\end{proof}

\section{Generalized Nijenhuis operators}\label{sec3}
\setcounter{equation}{0}

On a BiHom-associative algebra, one can consider usual Rota-Baxter operators (as in \cite{lmmp1}), but also a more general version, 
depending on some extra maps, called $\{\sigma , \tau \}$-Rota-Baxter operators in \cite{canadian}. The aim of this section 
is to show that a similar situation occurs for Nijenhuis operators. 

We begin by introducing a sort of Nijenhuis analogue of $\{\sigma , \tau \}$-Rota-Baxter operators, and prove that they 
satisfy the expected property, namely they lead to BiHom-NS-algebras. 
\begin{theorem} \label{mainnij}
Let $(A, \mu , \alpha , \beta )$ be a BiHom-associative algebra, let $\sigma, \gamma, \tau, \delta :A\rightarrow A$ be linear maps that are multiplicative with respect to $\mu$ and such that any two of the maps $\alpha , \beta , \sigma, \gamma, \tau, \delta $
commute and let $N:A\rightarrow A$ be a linear map satisfying the following conditions (for all $x, y\in A$):
\begin{eqnarray}
&&\alpha \sigma \gamma N=N\alpha \sigma \gamma , \label{extracom1}\\
&&\beta \tau \delta N=N\beta \tau \delta , \label{extracom2} \\
&&\sigma\gamma N(x)\tau\delta N(y)=N(\sigma \gamma (x)\delta N(y)+\gamma N(x)\tau \delta (y)-N(\gamma (x)\delta (y))), 
\label{genNij} \\
&&\alpha \sigma\gamma ^2N(x)\tau\delta N(y)=N(\alpha \sigma \gamma ^2(x)\delta N(y)+\alpha \gamma ^2N(x)\tau \delta (y)-N(\alpha \gamma ^2(x)\delta (y))), 
\label{genNijsup1} \\
&&\sigma\gamma N(x)\beta \tau\delta ^2N(y)=N(\sigma \gamma (x)\beta \delta ^2N(y)+\gamma N(x)\beta \tau \delta ^2(y)-N(\gamma (x)\beta \delta ^2(y)))
\label{genNijsup2} 
\end{eqnarray}
(we call such  $N$ a generalized Nijenhuis operator). Define the following operations on $A$: 
\begin{eqnarray*}
&&x\prec y=\sigma \gamma (x)\delta N(y), \;\;\;\;\;x\succ y=\gamma N(x)\tau \delta (y), \;\;\;\;\;
x\vee y=-N(\gamma (x)\delta (y)),
\end{eqnarray*}
for all $x, y\in A$. Then $(A, \prec, \succ, \vee, \alpha \sigma \gamma , \beta \tau \delta )$ is a BiHom-NS-algebra. 
In particular, if we define a new multiplication on $A$ by $x\ast y=\sigma \gamma (x)\delta N(y)+\gamma N(x)\tau \delta (y)-
N(\gamma (x)\delta (y))$, then $(A, \ast, \alpha \sigma \gamma , \beta \tau \delta )$ is a BiHom-associative algebra. 
\end{theorem}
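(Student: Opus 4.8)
The strategy is to verify the seven axioms \eqref{BiHomNS0}--\eqref{BiHomNS6} defining a BiHom-NS-algebra for the tuple $(A, \prec, \succ, \vee, \alpha\sigma\gamma, \beta\tau\delta)$, and then invoke Proposition \ref{BHNStoBHA} for the ``in particular'' part. First I would record the auxiliary map $\alpha\sigma\gamma$ and $\beta\tau\delta$; since $\sigma,\gamma,\tau,\delta$ are all multiplicative with respect to $\mu$ and pairwise commute with each other and with $\alpha,\beta$, so are these composites, hence $(A,\mu,\alpha\sigma\gamma,\beta\tau\delta)$ is itself BiHom-associative. Condition \eqref{BiHomNS0} is then immediate from commutativity of the structure maps. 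Conditions \eqref{BiHomNS1}--\eqref{BiHomNS2} (multiplicativity of $\alpha\sigma\gamma$ and $\beta\tau\delta$ with respect to $\prec,\succ,\vee$) follow from the multiplicativity of each map involved, together with the commutation relations \eqref{extracom1}--\eqref{extracom2}, which are needed precisely so that, e.g., $\alpha\sigma\gamma$ passes through the occurrences of $N$ in the definitions of $\prec,\succ,\vee$.

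\textbf{The core computation.} The substance is in \eqref{BiHomNS3}--\eqref{BiHomNS6}. For \eqref{BiHomNS3}, I would expand $(x\prec y)\prec (\beta\tau\delta)(z)$ using the definition of $\prec$ and push the structure maps through using multiplicativity and the commutation relations; the left side becomes something of the shape $\sigma\gamma(\sigma\gamma(x)\delta N(y))\,\delta N(\beta\tau\delta(z))$, and after rewriting this should be matched against $(\alpha\sigma\gamma)(x)\prec(y\ast z)$, where $y\ast z=\sigma\gamma(y)\delta N(z)+\gamma N(y)\tau\delta(z)-N(\gamma(y)\delta(z))$ is exactly the argument appearing inside $N$ in \eqref{genNij}. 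The key point is that BiHom-associativity of $(A,\mu)$ (the identity $\alpha(u)(vw)=(uv)\beta(w)$) reassociates the triple product, and then \eqref{genNij} (in the version twisted by an extra $\alpha\gamma$ or $\beta\delta$, which is why \eqref{genNijsup1}--\eqref{genNijsup2} are included) converts the product $N(\cdots)N(\cdots)$ into a single $N$ of the $\ast$-expression. I expect \eqref{BiHomNS4} and \eqref{BiHomNS5} to be analogous but lighter: \eqref{BiHomNS4} should reduce to pure BiHom-associativity of $\mu$ with no appeal to the Nijenhuis identities (both sides are iterated products $\gamma N(x)\,\tau\delta N(y)\cdots$ reassociated), and \eqref{BiHomNS5} to BiHom-associativity plus one application of \eqref{genNij} on the left factor, after expanding $x\ast y$.

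\textbf{The hard part.} The main obstacle is \eqref{BiHomNS6}, the four-term relation, since it does not follow from bimodule structure alone. My plan is to expand all four terms $\alpha\sigma\gamma(x)\succ(y\vee z)$, $(x\ast y)\vee\beta\tau\delta(z)$, $\alpha\sigma\gamma(x)\vee(y\ast z)$, $(x\vee y)\prec\beta\tau\delta(z)$ into expressions in $\mu$, $N$ and the six structure maps, using \eqref{genNij} to collapse the $N(\cdots)N(\cdots)$ terms that arise from the $\ast$-expressions inside $\vee$. One should be left with an identity purely among $\mu$- and $N$-monomials; I anticipate that after reassociating via BiHom-associativity of $\mu$ it reduces to \eqref{genNij} once more, possibly in the twisted forms \eqref{genNijsup1} or \eqref{genNijsup2} — this is the reason those two extra hypotheses are imposed (mirroring the way the supplementary Rota-Baxter identities are needed in \cite{canadian}). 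It is worth double-checking the bookkeeping of which of $\alpha\gamma^2$ versus $\beta\delta^2$ twist appears in each term, since the asymmetry between the ``$\alpha$ side'' and the ``$\beta$ side'' of BiHom-associativity is exactly what forces two supplementary axioms rather than one. Once \eqref{BiHomNS0}--\eqref{BiHomNS6} are established, the final sentence follows immediately: by Proposition \ref{BHNStoBHA}, $(A,\ast,\alpha\sigma\gamma,\beta\tau\delta)$ with $\ast=\prec+\succ+\vee$ is BiHom-associative, and by construction $x\ast y=\sigma\gamma(x)\delta N(y)+\gamma N(x)\tau\delta(y)-N(\gamma(x)\delta(y))$. $\square$
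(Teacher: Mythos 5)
Your overall strategy coincides with the paper's: a direct verification of \eqref{BiHomNS0}--\eqref{BiHomNS6}, with \eqref{BiHomNS4} needing only BiHom-associativity and the commutation relations, and the last claim obtained from Proposition \ref{BHNStoBHA}; that is exactly how the published proof runs, so the plan is viable. Two points need correction, though. First, your opening assertion that $(A,\mu,\alpha\sigma\gamma,\beta\tau\delta)$ is itself BiHom-associative is false in general: that would require $\alpha\sigma\gamma(x)(yz)=(xy)\beta\tau\delta(z)$, which does not follow from $\alpha(x)(yz)=(xy)\beta(z)$ together with multiplicativity and commutativity of $\sigma,\gamma,\tau,\delta$ (one would also have to deform $\mu$, as in a Yau twist). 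Fortunately this claim is not load-bearing in your outline: \eqref{BiHomNS0} needs only that $\alpha\sigma\gamma$ and $\beta\tau\delta$ commute, and in the core computations you correctly reassociate with the original identity $\alpha(u)(vw)=(uv)\beta(w)$; but as stated it is an error and should be deleted.

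Second, you misplace where the supplementary hypotheses enter. For \eqref{BiHomNS3} (and likewise \eqref{BiHomNS5}) no twisted version of the Nijenhuis identity is needed: after reassociating, the offending product is $\sigma\gamma\delta N(y)\,\tau\delta^{2}N(z)=\delta\bigl(\sigma\gamma N(y)\,\tau\delta N(z)\bigr)$ (respectively a $\gamma$ factors out), so the plain identity \eqref{genNij} applies and the extracted $\delta$ (resp.\ $\gamma$) re-enters precisely in the pattern $\delta N(y\ast z)$ (resp.\ $\gamma N(x\ast y)$) occurring in the definition of $\prec$ (resp.\ $\succ$); note that the twisted identities could not even be applied there, since no $\beta$ (resp.\ $\alpha$) is present. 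The identities \eqref{genNijsup1} and \eqref{genNijsup2} are genuinely needed only in \eqref{BiHomNS6}, where the factors $\alpha\gamma^{2}$ and $\beta\delta^{2}$ appear sometimes inside and sometimes outside $N$ and cannot be moved across $N$ using only \eqref{extracom1}--\eqref{extracom2}, which constrain the full composites $\alpha\sigma\gamma$ and $\beta\tau\delta$ rather than the individual maps -- this is the correct version of the reason you hedge about. Finally, be aware that your treatment of \eqref{BiHomNS6} is only a plan, not a computation; the paper's proof carries out exactly the expansion you describe (apply \eqref{genNijsup2} to the term coming from $(x\vee y)\prec\beta\tau\delta(z)$, reassociate, then apply \eqref{genNijsup1} to absorb the $N^{2}$ term), so the bookkeeping does close, but it must actually be done to complete the argument.
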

\begin{proof}
We need to check the conditions (\ref{BiHomNS0})-(\ref{BiHomNS6}). The condition (\ref{BiHomNS0}) is 
obviously satisfied, while (\ref{BiHomNS1}) and (\ref{BiHomNS2}) follow immediately by using (\ref{extracom1}) and 
(\ref{extracom2}). To check  (\ref{BiHomNS3}), (\ref{BiHomNS4}) and (\ref{BiHomNS5}) we compute: 
\begin{eqnarray*}
(x\prec y)\prec \beta \tau \delta (z)
&=&(\sigma \gamma (x)\delta N(y))\prec \beta \tau \delta (z)=(\sigma ^2\gamma ^2(x)\sigma \gamma \delta N(y))\delta N\beta \tau \delta (z)\\
&\overset{(\ref{extracom2})}{=}&(\sigma ^2\gamma ^2(x)\sigma \gamma \delta N(y))\beta \tau \delta ^2N(z)\\
&\overset{(\ref{eqasso})}{=}&\alpha \sigma ^2\gamma ^2(x)(\sigma \gamma \delta N(y) \tau \delta ^2N(z))=
\alpha \sigma ^2\gamma ^2(x)\delta (\sigma \gamma N(y) \tau \delta N(z))\\
&\overset{(\ref{genNij})}{=}&\alpha \sigma ^2\gamma ^2(x)(\delta N(\sigma \gamma (y)\delta N(z)+\gamma N(y)\tau \delta (z)-N(\gamma (y)\delta (z))))\\
&=&\alpha \sigma ^2\gamma ^2(x)\delta N(y\ast z) =\sigma \gamma (\alpha \sigma \gamma (x))\delta N(y\ast z)
=\alpha \sigma \gamma (x)\prec (y\ast z), 
\end{eqnarray*}
\begin{eqnarray*}
(x\succ y)\prec \beta \tau \delta (z)&=&(\gamma N(x)\tau \delta (y))\prec \beta \tau \delta (z)=
(\sigma \gamma ^2N(x)\sigma \gamma \tau \delta (y))\delta N\beta \tau \delta (z)\\
&\overset{(\ref{extracom2})}{=}&(\sigma \gamma ^2N(x)\sigma \gamma \tau \delta (y))\beta \tau \delta ^2N(z)
\overset{(\ref{eqasso})}{=}\alpha \sigma \gamma ^2N(x)(\sigma \gamma \tau \delta (y)\tau \delta ^2N(z))\\
&=&\alpha \sigma \gamma ^2N(x)\tau \delta (\sigma \gamma (y)\delta N(z))=\alpha \sigma \gamma ^2N(x)\tau \delta 
(y\prec z)\\
&\overset{(\ref{extracom1})}{=}&\gamma N(\alpha \sigma \gamma (x))\tau \delta (y\prec z)=\alpha \sigma \gamma (x)\succ 
(y\prec z), 
\end{eqnarray*}
\begin{eqnarray*}
\alpha \sigma \gamma (x)\succ (y\succ z)&=&\alpha \sigma \gamma (x)\succ (\gamma N(y)\tau \delta (z))
=\gamma N\alpha \sigma \gamma (x)(\tau \delta \gamma N(y)\tau ^2\delta ^2(z))\\
&\overset{(\ref{extracom1})}{=}&\alpha \sigma \gamma ^2N(x)(\tau \delta \gamma N(y)\tau ^2\delta ^2(z))
\overset{(\ref{eqasso})}{=}(\sigma \gamma ^2N(x)\tau \delta \gamma N(y))\beta \tau ^2\delta ^2(z)\\
&=&\gamma (\sigma \gamma N(x)\tau \delta N(y))\beta \tau ^2\delta ^2(z)\overset{(\ref{genNij})}{=}
\gamma N(x\ast y)\beta \tau ^2\delta ^2(z)\\
&=&\gamma N(x\ast y)\tau \delta (\beta \tau \delta (z))=(x\ast y)\succ \beta \tau \delta (z). 
\end{eqnarray*}
Finally, we check (\ref{BiHomNS6}):\\[2mm]
${\;\;\;\;\;}$
$(x\vee y)\prec \beta \tau \delta (z)+(x\ast y)\vee \beta \tau \delta (z)$
\begin{eqnarray*}
&=&-N(\gamma (x)\delta (y))\prec \beta \tau \delta (z)+(\sigma \gamma (x)\delta N(y))\vee \beta \tau \delta (z)\\
&&+(\gamma N(x)\tau \delta (y))\vee \beta \tau \delta (z)-N(\gamma (x)\delta (y))\vee \beta \tau \delta (z)\\
&=&-\sigma \gamma N(\gamma (x)\delta (y))\delta N\beta \tau \delta (z)-N((\sigma \gamma ^2(x)\gamma \delta N(y))
\delta \beta \tau \delta (z))\\
&&-N((\gamma ^2N(x)\tau \delta \gamma (y))\delta \beta \tau \delta (z))+N(\gamma N(\gamma (x)\delta (y))
\delta \beta \tau \delta (z))\\
&\overset{(\ref{extracom2}), (\ref{eqasso})}{=}&-\sigma \gamma N(\gamma (x)\delta (y))\beta \tau \delta ^2N(z)-
N(\alpha \sigma \gamma ^2(x)(\gamma \delta N(y)\tau \delta ^2(z)))\\
&&-N(\alpha \gamma ^2N(x)(\tau \delta \gamma (y)\tau \delta ^2(z)))+
N(\gamma N(\gamma (x)\delta (y))\delta \beta \tau \delta (z))\\
&\overset{(\ref{genNijsup2})}{=}&-N((\sigma \gamma ^2(x)\sigma \gamma \delta (y))\beta \delta ^2N(z))-
N(\gamma N(\gamma (x)\delta (y))\beta \tau \delta ^2(z))\\
&&+N^2((\gamma ^2(x)\gamma \delta (y))\beta \delta ^2(z))-N(\alpha \sigma \gamma ^2(x)(\gamma \delta N(y)
\tau \delta ^2(z)))\\
&&-N(\alpha \gamma ^2N(x)(\tau \delta \gamma (y)\tau \delta ^2(z)))+
N(\gamma N(\gamma (x)\delta (y))\delta \beta \tau \delta (z))\\
&\overset{(\ref{eqasso})}{=}&-N(\alpha \sigma \gamma ^2(x)(\sigma \gamma \delta (y)\delta ^2N(z)))
+N^2(\alpha \gamma ^2(x)\delta (\gamma (y)\delta (z)))\\
&&-N(\alpha \sigma \gamma ^2(x)(\gamma \delta N(y)\tau \delta ^2(z)))-
N(\alpha \gamma ^2N(x)\tau \delta (\gamma (y)\delta (z)))\\
&\overset{(\ref{genNijsup1})}{=}&-N(\alpha \sigma \gamma ^2(x)(\sigma \gamma \delta (y)\delta ^2N(z)))-
N(\alpha \sigma \gamma ^2(x)(\gamma \delta N(y)\tau \delta ^2(z)))\\
&&+N(\alpha \sigma \gamma ^2(x)\delta N(\gamma (y)\delta (z)))-
\alpha \sigma \gamma ^2N(x)\tau \delta N(\gamma (y)\delta (z))\\
&=&N(\alpha \sigma \gamma ^2(x)\delta (N(\gamma (y)\delta (z))-\sigma \gamma (y)\delta N(z)-\gamma N(y)\tau \delta (z)))\\
&&-
\alpha \sigma \gamma ^2N(x)\tau \delta N(\gamma (y)\delta (z))\\
&=&-N(\alpha \sigma \gamma ^2(x)\delta (y\ast z))+\alpha \sigma \gamma ^2N(x)\tau \delta (y\vee z)\\
&\overset{(\ref{extracom1})}{=}&-N(\gamma (\alpha \sigma \gamma (x))\delta (y\ast z))
+\gamma N(\alpha \sigma \gamma (x))\tau \delta (y\vee z)\\
&=&\alpha \sigma \gamma (x)\vee (y\ast z)+\alpha \sigma \gamma (x)\succ (y\vee z), 
\end{eqnarray*}
finishing the proof. 
\end{proof}

We consider now some particular cases of Theorem \ref{mainnij}.
\begin{corollary}\label{firstconseq}
Let $(A, \mu , \alpha , \beta )$ be a BiHom-associative algebra and let $N:A\rightarrow A$ be a linear map commuting with 
$\alpha ^2$ and $\beta ^2$ and 
satisfying the following conditions (for all $x, y\in A$):
\begin{eqnarray}
&&\alpha N(x)\beta N(y)=N(\alpha (x)N(y)+\alpha N(x)\beta (y)-N(\alpha (x)y)), 
\label{gN1} \\
&&\alpha N(x)\beta ^2N(y)=N(\alpha (x)\beta N(y)+\alpha N(x)\beta ^2(y)-N(\alpha (x)\beta (y))).
\label{gN2}
\end{eqnarray}
Define new operations on $A$ by  
$x\prec y=\alpha (x)N(y)$, $x\succ y=\alpha N(x)\beta (y)$, 
$x\vee y=-N(\alpha (x)y)$, 
for all $x, y\in A$. Then $(A, \prec, \succ, \vee, \alpha ^2 , \beta ^2 )$ is a BiHom-NS-algebra. 
In particular, if we define a new multiplication on $A$ by $x\ast y=\alpha (x)N(y)+\alpha N(x)\beta (y)
-N(\alpha (x)y)$, then $(A, \ast, \alpha ^2 , \beta ^2 )$ is a BiHom-associative algebra. 
\end{corollary}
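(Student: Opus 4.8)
The plan is to obtain Corollary \ref{firstconseq} as a direct specialization of Theorem \ref{mainnij}, applied with the auxiliary maps chosen to be $\sigma = \delta = id_A$, $\gamma = \alpha$ and $\tau = \beta$. For this choice the structure maps delivered by the theorem are $\alpha\sigma\gamma = \alpha^2$ and $\beta\tau\delta = \beta^2$, and the three operations it produces specialize to $x\prec y = \sigma\gamma(x)\delta N(y) = \alpha(x)N(y)$, $x\succ y = \gamma N(x)\tau\delta(y) = \alpha N(x)\beta(y)$ and $x\vee y = -N(\gamma(x)\delta(y)) = -N(\alpha(x)y)$, which are exactly the operations in the statement. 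So everything reduces to verifying that the hypotheses of Theorem \ref{mainnij} hold for this data.

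The structural requirements are immediate: $\alpha$ and $\beta$ are multiplicative with respect to $\mu$ by the definition of a BiHom-associative algebra, $id_A$ is trivially multiplicative, and the maps $\alpha$, $\beta$, $id_A$ pairwise commute since $\alpha\beta = \beta\alpha$. Under the substitution the commutation axioms (\ref{extracom1}) and (\ref{extracom2}) become $\alpha^2 N = N\alpha^2$ and $\beta^2 N = N\beta^2$, which are precisely the hypotheses imposed on $N$ in the corollary. Axiom (\ref{genNij}) becomes $\alpha N(x)\beta N(y) = N(\alpha(x)N(y) + \alpha N(x)\beta(y) - N(\alpha(x)y))$, i.e. exactly (\ref{gN1}), and a similar bookkeeping shows that (\ref{genNijsup2}) specializes verbatim to (\ref{gN2}).

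The single point that needs an actual (one-line) argument is axiom (\ref{genNijsup1}), which under our substitution reads $\alpha^3 N(x)\beta N(y) = N(\alpha^3(x)N(y) + \alpha^3 N(x)\beta(y) - N(\alpha^3(x)y))$. I would deduce this from (\ref{gN1}) by substituting $\alpha^2(x)$ in place of $x$ and then using $N\alpha^2 = \alpha^2 N$ (so that $\alpha N(\alpha^2(x)) = \alpha^3 N(x)$) together with $\alpha\alpha^2 = \alpha^3$. With all six hypotheses of Theorem \ref{mainnij} in place, the conclusion that $(A,\prec,\succ,\vee,\alpha^2,\beta^2)$ is a BiHom-NS-algebra, and that $(A,\ast,\alpha^2,\beta^2)$ with $\ast = \prec + \succ + \vee$ is BiHom-associative, follows immediately. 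I do not anticipate a genuine difficulty: the only thing to watch is the careful tracking of composites such as $\sigma\gamma^2 N$ and $\beta\tau\delta^2$ when plugging in, and the fact that the \emph{super} axiom (\ref{genNijsup1}) --- which is not a formal consequence of (\ref{genNij}) for arbitrary parameters --- does follow here precisely because $N$ commutes with $\alpha^2$.
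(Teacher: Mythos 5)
Your proposal is correct and coincides with the paper's own proof: the paper also specializes Theorem \ref{mainnij} with $\sigma=\delta=id$, $\gamma=\alpha$, $\tau=\beta$, observes that (\ref{genNij}) and (\ref{genNijsup2}) become (\ref{gN1}) and (\ref{gN2}) respectively, and that (\ref{genNijsup1}) is a consequence of (\ref{gN1}). Your explicit one-line derivation of (\ref{genNijsup1}) (substituting $\alpha^2(x)$ into (\ref{gN1}) and using $N\alpha^2=\alpha^2N$) just spells out the step the paper leaves implicit.
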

\begin{proof}
Take in Theorem \ref{mainnij} $\sigma =id$, $\gamma =\alpha $, $\tau =\beta $, $\delta =id$. Then (\ref{genNij}) becomes 
(\ref{gN1}), (\ref{genNijsup1}) is a consequence of (\ref{gN1}) and 
(\ref{genNijsup2}) becomes (\ref{gN2}). 
\end{proof}
\begin{corollary}\label{secondconseq}
Let $(A, \mu , \alpha , \beta )$ be a BiHom-associative algebra and let $N:A\rightarrow A$ be a linear map commuting with 
$\alpha ^2$ and $\beta ^2$ and 
satisfying the following conditions (for all $x, y\in A$):
\begin{eqnarray}
&&\alpha N(x)\beta N(y)=N(\alpha (x)\beta N(y)+ N(x)\beta (y)-N(x\beta (y))), 
\label{gN3} \\
&&\alpha ^2N(x)\beta N(y)=N(\alpha ^2(x)\beta N(y)+ \alpha N(x)\beta (y)-N(\alpha (x)\beta (y))). 
\label{gN4}
\end{eqnarray}
Define new operations on $A$ by $x\prec y=\alpha (x)\beta N(y)$, $x\succ y=N(x)\beta (y)$, $x\vee y=-N(x\beta (y))$,  
for all $x, y\in A$. Then $(A, \prec, \succ, \vee, \alpha ^2, \beta ^2 )$ is a BiHom-NS-algebra. 
In particular, if we define a new multiplication on $A$ by $x\ast y=\alpha (x)\beta N(y)+N(x)\beta (y)-N(x\beta (y))$, 
then $(A, \ast, \alpha ^2 , \beta ^2 )$ is a BiHom-associative algebra. 
\end{corollary}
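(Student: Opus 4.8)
The plan is to obtain Corollary~\ref{secondconseq} as a particular instance of Theorem~\ref{mainnij}, in the same spirit as the proof of Corollary~\ref{firstconseq}. Matching the operations $x\prec y=\alpha(x)\beta N(y)$, $x\succ y=N(x)\beta(y)$, $x\vee y=-N(x\beta(y))$ of the corollary against the operations $x\prec y=\sigma\gamma(x)\delta N(y)$, $x\succ y=\gamma N(x)\tau\delta(y)$, $x\vee y=-N(\gamma(x)\delta(y))$ of Theorem~\ref{mainnij}, and the structure maps $\alpha^{2},\beta^{2}$ against $\alpha\sigma\gamma,\beta\tau\delta$, one is forced to take $\sigma=\alpha$, $\gamma=id$, $\tau=id$, $\delta=\beta$. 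First I would note that with this choice all the standing assumptions of Theorem~\ref{mainnij} are automatically satisfied: $\sigma,\gamma,\tau,\delta$ are clearly multiplicative with respect to $\mu$, and any two of $\alpha,\beta,\sigma,\gamma,\tau,\delta$ commute, the only nontrivial pair being $\{\alpha,\beta\}$, which commutes by the definition of a BiHom-associative algebra.

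Next I would check, one by one, that the five conditions (\ref{extracom1})--(\ref{genNijsup2}) of Theorem~\ref{mainnij} specialize, under this substitution, to the hypotheses of Corollary~\ref{secondconseq}. Conditions (\ref{extracom1}) and (\ref{extracom2}) become $\alpha^{2}N=N\alpha^{2}$ and $\beta^{2}N=N\beta^{2}$, which are exactly the commutation assumptions on $N$; condition (\ref{genNij}) becomes (\ref{gN3}); and condition (\ref{genNijsup1}) becomes (\ref{gN4}). The only condition that is not verbatim a hypothesis of the corollary is (\ref{genNijsup2}): since here $\delta^{2}=\beta^{2}$, it reads
\[
\alpha N(x)\,\beta^{3}N(y)=N\bigl(\alpha(x)\beta^{3}N(y)+N(x)\beta^{3}(y)-N(x\beta^{3}(y))\bigr),
\]
and I would derive this from (\ref{gN3}) by replacing $y$ with $\beta^{2}(y)$ and using that $N$ commutes with $\beta^{2}$, so that $N(\beta^{2}(y))=\beta^{2}N(y)$ may be pulled through; this turns the left-hand side of (\ref{gN3}) into $\alpha N(x)\beta^{3}N(y)$ and its right-hand side into the displayed expression.

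With all the hypotheses matched, Theorem~\ref{mainnij} yields at once that $(A,\prec,\succ,\vee,\alpha^{2},\beta^{2})$ is a BiHom-NS-algebra with the operations as stated, and the statement that $(A,\ast,\alpha^{2},\beta^{2})$ is BiHom-associative follows from the corresponding statement in Theorem~\ref{mainnij} (equivalently, from Proposition~\ref{BHNStoBHA}), after observing that $\prec+\succ+\vee$ is precisely the operation $x\ast y=\alpha(x)\beta N(y)+N(x)\beta(y)-N(x\beta(y))$. I do not anticipate any genuine obstacle here: the whole argument is a routine bookkeeping verification that the substitution $\sigma=\alpha$, $\gamma=id$, $\tau=id$, $\delta=\beta$ transforms the general axioms of Theorem~\ref{mainnij} into the special ones, the only mildly delicate point being the reduction of (\ref{genNijsup2}) to (\ref{gN3}) via the commutation of $N$ with $\beta^{2}$.
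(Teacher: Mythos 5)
Your proposal is correct and follows exactly the paper's own route: specializing Theorem~\ref{mainnij} with $\sigma=\alpha$, $\gamma=id$, $\tau=id$, $\delta=\beta$, so that (\ref{genNij}) becomes (\ref{gN3}), (\ref{genNijsup1}) becomes (\ref{gN4}), and (\ref{genNijsup2}) follows from (\ref{gN3}) via $y\mapsto\beta^{2}(y)$ and the commutation $N\beta^{2}=\beta^{2}N$. Your explicit verification of this last reduction is exactly what the paper leaves implicit with the phrase ``(\ref{genNijsup2}) is a consequence of (\ref{gN3})''.
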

\begin{proof}
Take in Theorem \ref{mainnij} $\sigma =\alpha $, $\gamma =id $, $\tau =id $, $\delta =\beta$. Then (\ref{genNij}) becomes 
(\ref{gN3}), (\ref{genNijsup1}) becomes (\ref{gN4}) and (\ref{genNijsup2}) is a consequence of (\ref{gN3}). 
\end{proof}
\begin{remark}
Note that, in Corollary \ref{firstconseq}, if $N$ commutes with $\beta $, then (\ref{gN2}) is a consequence of (\ref{gN1}); 
however, in general $N$ does not commute with $\beta $, such a situation occurs for instance in Example \ref{expjgeomphys} 
below. 
A similar discussion holds for Corollary 
\ref{secondconseq}. 
\end{remark}

Finally, by taking in Theorem \ref{mainnij} $\sigma =\gamma =\tau =\delta =id$, one can easily see that we obtain:
\begin{corollary}
Let $(A, \mu , \alpha , \beta )$ be a BiHom-associative algebra and let $N:A\rightarrow A$ be a Nijenhuis operator in the 
usual sense, that it 
\begin{eqnarray}
&&N(x)N(y)=N(xN(y)+N(x)y-N(xy)), \;\;\; \forall \; x, y\in A, \label{Nijeq}
\end{eqnarray}
which commutes with $\alpha $ and $\beta $. If we define new operations on $A$ by $x\prec y=xN(y)$, 
$x\succ y=N(x)y$, $x\vee y=-N(xy)$, then  $(A, \prec, \succ, \vee, \alpha , \beta  )$ is a BiHom-NS-algebra. 
\end{corollary}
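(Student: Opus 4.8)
The plan is to deduce this corollary directly from \thref{mainnij} by specializing the auxiliary structure maps. First I would set $\sigma=\gamma=\tau=\delta=\Id$ in \thref{mainnij}. Then the structure maps $\alpha\sigma\gamma$ and $\beta\tau\delta$ of the resulting BiHom-NS-algebra collapse to $\alpha$ and $\beta$, and the three operations $x\prec y=\sigma\gamma(x)\delta N(y)$, $x\succ y=\gamma N(x)\tau\delta(y)$, $x\vee y=-N(\gamma(x)\delta(y))$ become exactly $xN(y)$, $N(x)y$ and $-N(xy)$, which are the operations in the statement. So it only remains to check that the hypotheses of \thref{mainnij} are met under this specialization.

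Next I would go through those hypotheses one by one. The conditions (\ref{extracom1}) and (\ref{extracom2}) reduce to $\alpha N=N\alpha$ and $\beta N=N\beta$, which are assumed. Condition (\ref{genNij}) becomes precisely the Nijenhuis identity (\ref{Nijeq}). The only point requiring a small argument is the pair of supplementary conditions (\ref{genNijsup1}) and (\ref{genNijsup2}). Here I would note that, after the specialization, (\ref{genNijsup1}) reads $\alpha N(x)N(y)=N(\alpha(x)N(y)+\alpha N(x)y-N(\alpha(x)y))$, and this is obtained by replacing $x$ by $\alpha(x)$ in (\ref{Nijeq}) and then using $N\alpha=\alpha N$ to pull $\alpha$ out of $N$ on both sides (multiplicativity of $\alpha$ is not even needed here, only the commutation). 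Dually, (\ref{genNijsup2}) becomes $N(x)\beta N(y)=N(x\beta N(y)+N(x)\beta(y)-N(x\beta(y)))$, which follows by replacing $y$ by $\beta(y)$ in (\ref{Nijeq}) and using $N\beta=\beta N$.

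Having verified all the hypotheses, the conclusion that $(A,\prec,\succ,\vee,\alpha,\beta)$ is a BiHom-NS-algebra is then immediate from \thref{mainnij}. I do not expect any genuine obstacle: the entire content of the corollary is the observation that an honest Nijenhuis operator commuting with $\alpha$ and $\beta$ automatically satisfies the two extra axioms imposed by the generalized notion, and this is forced purely by the two commutation relations together with the fact that (\ref{Nijeq}) is an identity valid for all arguments, so one may freely substitute $\alpha(x)$ and $\beta(y)$ into it.
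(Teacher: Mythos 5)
Your proposal is correct and is exactly the paper's route: the corollary is stated there as the specialization $\sigma=\gamma=\tau=\delta=\mathrm{id}$ of Theorem~\ref{th:mainnij}, and your verification that (\ref{eq:genNijsup1}) and (\ref{eq:genNijsup2}) follow from (\ref{eq:Nijeq}) by substituting $\alpha(x)$, resp.\ $\beta(y)$, and using $N\alpha=\alpha N$, $N\beta=\beta N$ is the (easy) check the paper leaves implicit.
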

\begin{example}\label{expjgeomphys}
Let $(A, \mu , \alpha , \beta )$ be a BiHom-associative algebra and let $a\in A$ such that $\alpha ^2(a)=\beta ^2(a)=a$. 
Define $N_1, N_2:A\rightarrow A$ by $N_1(x)=\alpha (a)x$ and $N_2(x)=x\alpha (a)$, for all $x\in A$. Then one can easily 
see that $N_1$ satisfies the hypotheses of Corollary \ref{firstconseq} and $N_2$ satisfies the hypotheses of 
Corollary \ref{secondconseq}. In both cases, the new multiplication $\ast $ is given by the same formula, namely 
$x\ast y=\alpha (x)(\alpha (a)y)$, so it turns out that $(A, \ast , \alpha ^2, \beta ^2)$ is a BiHom-associative algebra, 
which was the content of  Lemma 3.1 in \cite{usjgeomphys}, for which we have thus obtained a Nijenhuis operator interpretation. 
\end{example}

\begin{center}
ACKNOWLEDGEMENTS
\end{center}
Ling Liu was supported by the NSF of China (No. 12071441). Abdenacer Makhlouf was partially supported by GDRI Eco-Math. 
This paper was written while Claudia Menini was a member of the
"National Group for Algebraic and Geometric Structures and their
Applications" (GNSAGA-INdAM) and was partially supported by MIUR within the
National Research Project PRIN 2017. Florin Panaite was partially supported by a grant from UEFISCDI,
project number PN-III-P4-PCE-2021-0282.


\begin{thebibliography}{99}

\bibitem{pacific}
C. Bai, L. Guo, X. Ni, \emph{$\mathcal{O}$-operators on associative algebras and associative Yang-Baxter equations}, 
Pacific J. Math. \textbf{256} (2012), 257--289. 

\bibitem{carinena}
J. F. Cari\~{n}ena, J. Grabowski, G. Marmo, \emph{Quantum bi-Hamiltonian systems}, Internat. J. Modern Phys. A \textbf{15} (2000), 
4797--4810.  

\bibitem{daslie}
A. Das, \emph{Twisted Rota-Baxter operators and Reynolds operators on Lie algebras and NS-Lie algebras},  J. Math. Phys. \textbf{62} (2021), art. nr. 091701. 

\bibitem{das}
A. Das, \emph{Cohomology of  BiHom-associative algebras}, J. Algebra Appl. \textbf{21} (2022), art. nr. 2250008.

\bibitem{dasleibniz}
A. Das, S. Guo, Twisted relative Rota-Baxter operators on Leibniz algebras and NS-Leibniz algebras, 
arXiv:math.RA/2102.09752. 

\bibitem{gmmp} G. Graziani, A. Makhlouf, C. Menini, F. Panaite, \emph{BiHom-associative algebras, BiHom-Lie algebras
and BiHom-bialgebras}, Symmetry Integrability Geom. Methods Appl. (SIGMA) \textbf{11} (2015), 086, 34 pages.

\bibitem{JDS}
J. T. Hartwig, D. Larsson, S. D. Silvestrov, \emph{Deformations of Lie algebras using $\sigma$-derivations}, J.
Algebra \textbf{295} (2006), 314--361.

\bibitem{DS}
D. Larsson, S. D. Silvestrov, \emph{Quasi-hom-Lie algebras, central extensions and $2$-cocycle-like identities}, 
J. Algebra \textbf{288} (2005), 321--344.

\bibitem{leroux}
P. Leroux, \emph{Construction of Nijenhuis operators and dendriform trialgebras},
 Int. J. Math. Math. Sci.  \textbf{49-52} (2004), 2595--2615. 

\bibitem{canadian}
 L. Liu, A. Makhlouf, C. Menini, F. Panaite, \emph{$\{\sigma , \tau \}$-Rota-Baxter operators, infinitesimal Hom-bialgebras and the associative 
(Bi)Hom-Yang-Baxter equation}, Canad. Math. Bull. \textbf{62} (2019), 355--372. 

\bibitem{lmmp1}
 L. Liu, A. Makhlouf, C. Menini, F. Panaite,
\emph{Rota-Baxter operators on BiHom-associative algebras and related structures},
Colloq. Math. \textbf{161} (2020), 263--294.

\bibitem{lmmp2}
 L. Liu, A. Makhlouf, C. Menini, F. Panaite, 
\emph{BiHom-Novikov algebras and infinitesimal BiHom-bialgebras}, J. Algebra \textbf{560} (2020), 1146--1172. 

\bibitem{lmmp3}
 L. Liu, A. Makhlouf, C. Menini, F. Panaite, 
\emph{BiHom-pre-Lie algebras, BiHom-Leibniz algebras and Rota-Baxter operators on BiHom-Lie algebras}, 
Georgian Math. J. \textbf{28} (2021), 581--594. 

\bibitem{usjgeomphys}
 L. Liu, A. Makhlouf, C. Menini, F. Panaite, \emph{Tensor products and perturbations of  BiHom-Novikov-Poisson algebras}, 
J. Geom. Phys. \textbf{161} (2021), 104026. 

\bibitem{loday}
J.-L. Loday, \emph{Dialgebras}, in ''Dialgebras and other operads'',
Lecture Notes in Mathematics, \textbf{1763}, Springer, Berlin, 2001, 7--66.

\bibitem{lodayronco}
J.-L. Loday, M. Ronco, \emph{Trialgebras and families of polytopes}, in ''Homotopy theory: relations with
algebraic geometry, group cohomology and algebraic K-theory'', Contemp. Math. \textbf{346}, Amer. Math. Soc.,
Providence, RI, 2004, 369--398.

\bibitem{ms}
A. Makhlouf, S. D.  Silvestrov, \emph{Hom-algebras structures}, J. Gen. Lie Theory Appl. \textbf{2} (2008), 51--64.

\bibitem{opv}
C. Ospel, F. Panaite, P. Vanhaecke, \emph{Generalized NS-algebras}, arXiv:math.RA/2103.07530. 

\bibitem{rota}
G.-C. Rota, \emph{Baxter operators, an introduction}, in ''Gian-Carlo Rota on combinatorics'', 504--512, Contemp. Mathematicians, Birkhäuser Boston, Boston, MA, 1995

\bibitem{uchino}
K. Uchino, \emph{Quantum analogy of Poisson geometry, related dendriform algebras and Rota-Baxter
operators}, Lett. Math. Phys. \textbf{85} (2008), 91--109.

\bibitem{uchinoarXiv}
K. Uchino, Dendriform structures and twisted Baxter operators, arXiv:math.RA/0701320v3.



\end{thebibliography}
\end{document}